\documentclass[10pt,a4paper]{article}
\usepackage[utf8]{inputenc}
\usepackage[T1]{fontenc}
\usepackage{amsmath}
\usepackage{amsfonts}
\usepackage{amssymb}
\usepackage{amsthm}
\usepackage{xcolor}
\usepackage{bbm}
\usepackage[left=2cm,right=2cm,top=2cm,bottom=2cm]{geometry}

\newtheorem{thm}{THEOREM}[section]
\newtheorem{assumption}[thm]{Assumption}
\newtheorem{cor}[thm]{COROLLARY}

\newtheorem{definition}[thm]{DEFINITION}
\newtheorem{example}[thm]{EXAMPLE}
\newtheorem{proposition}[thm]{PROPOSITION}
\newtheorem{remark}[thm]{REMARK}

\providecommand{\norm}[1]{\left\lVert#1\right\rVert}%//.//
\providecommand{\abs}[1]{\left\lvert#1\right\rvert}
\providecommand{\pr}[1]{\left(#1\right)} %(.)
\providecommand{\pp}[1]{\left[#1\right]} %[.]
\providecommand{\set}[1]{\left\lbrace#1\right\rbrace} %{.}
\providecommand{\scal}[1]{\left\langle#1\right\rangle}%<.>

\providecommand{\keywords}[1]{\textbf{\textit{Keywords:  }} #1}
\providecommand{\classification}[1]{\textbf{\textit{AMS MSC: }} #1}

\begin{document}
\author{Dan GOREAC\footnote{Université Paris-EstUniversité Paris-Est, LAMA (UMR 8050), UPEMLV, UPEC, CNRS, F-77454, Marne-la-Vallée, France, Corresponding author, email:  dan.goreac@u-pem.fr}}
\title{Approximately Reachable Directions for Piecewise Linear Switched Systems}
\date{}
\maketitle
%\tableofcontents
\begin{abstract}{This paper deals with some reachability issues for piecewise linear switched systems with time-dependent coefficients and multiplicative noise. Namely, it aims at characterizing data that are almost reachable at some fixed time $T>0$ (belong to the closure of the reachable set in a suitable $\mathbb{L}^2$-sense). From a mathematical point of view, this provides the missing link between approximate controllability towards $0$ and approximate controllability towards given targets. The methods rely on linear-quadratic control and Riccati equations.  The main novelty is that we consider an LQ problem with \textit{controlled backward stochastic dynamics} and, since the coefficients are \textit{not deterministic} (unlike some of the cited references), neither is the backward stochastic Riccati equation. Existence and uniqueness of the solution of such equations rely on structure arguments (inspired by \cite{CFJ_2014}). Besides solvability, Riccati representation of the resulting control problem is provided as is the synthesis of optimal (non-Markovian) control. Several examples are discussed.}\end{abstract}
\classification{93B05, 93B25, 60J75}\\
\keywords{Reachability; Approximate controllability; Controlled switch process; Linear-quadratic control; Backward stochastic Riccati equation; Stochastic gene networks}
%\newpage 
\section{Introduction}
We consider systems consisting of two components (denoted for simplicity $\pr{\Gamma_t,X_t}$) dynamically evolving as the time $t\in\pp{0,T}$ changes up to some finite time horizon $T>0$. The first component switches at times $T_j$ between a family of modes living in some space $E$. The precise description will be given in the following section. As usual, a marked point measure $q$ is associated to $\Gamma$. Its compensator will be denoted by $\hat{q}$ and the martingale measure by $\tilde{q}:=q-\hat{q}$. As the mode switches, the component $X$ living in some Euclidean space $\mathbb{R}^n$ and obeying a linear controlled equation governed by predictable coefficient matrices and presenting multiplicative noise \[dX_t^u=\pr{A_tX_t^u+B_tu_t}dt+\int_EC_t(\theta)X_t^u\tilde{q}(dtd\theta),\textnormal{ for }t\in\pp{0,T},\]
changes its flow. (The simplest model one should have in mind is the case when $A,B,C$ keep track of the modes encountered up to time $t$). Such systems are best described using tools in point processes (e.g. \cite{Ikeda_Watanabe_1981}, \cite{Bremaud_1981}), but they also intersect particular cases of piecewise deterministic Markov processes (introduced in \cite{Davis_84}, \cite{davis_93}). They intervene, for instance, in mathematical models of biochemical reactions (describing gene networks). In this case, the exogenous control parameter ($u$) is a means to alter (enhance or inhibit) desired properties. Two kinds of (closely related) problems are very natural in this framework. The first (in a progressive perspective) strives to provide an answer to the question of being able to drive the system to desired outcomes (say targets $\xi$). The second, obeying to a passéist perspective, aims at providing a model (structure) validation by looking at effective data (still $\xi$) at some point in time and checking it against feasible (reachable) outcomes.\\
The ability to drive (in convenient manner by altering the control) the dynamical system to a favorable outcome is commonly known as \textit{controllability}. The subject has generated a consistent literature starting from the pioneer work \cite{Kalman_1959}. For non-random systems, one usually aims at \textit{exact controllability} (precisly reaching the target) and the tools are either algebraic or rely on estimates. In this setting, direct approach relies on Riccati techniques (Grammian matrix in the simplest framework) and dual methods on the notion of \textit{observability} (e.g. \cite{Hautus}) translating into invariance properties of convenient linear (sub)spaces or \textit{admissibility} criteria. Either method has been extensively generalized to infinite-dimensional systems (e.g. \cite{Schmidt_Stern_80}, \cite{Curtain_86}, \cite{russell_Weiss_1994}, \cite{Jacob_Partington_2006}, etc.).\\
For random systems, duality techniques rely on backward stochastic differential equations (BSDE introduced in \cite{Bismut_73} and generalized to nonlinear cases in \cite{Pardoux_Peng_90})\footnote{The current literature on BSDE is highly rich but, for our linear context, it seems superfluous to mention recent advances on discontinuity, terminal irregularity, super-linear coefficient growth, etc.}. This tool allows to give a final answer to the notion of exact controllability (cf. \cite{Peng_94}). Some refinement has been recently obtained in Brownian setting by relaxing the class of controls in \cite{LQiYongJiongminZhangXu_2012} and \cite{WangYangYongYu2016}. These papers make use of some full-rank condition(s). When such conditions fail to hold (e.g. $B$ is not of full rank and the system has no control on the noise part as it is the case here), one has to settle for weaker controllability properties. Controlling stochastic systems arbitrarily close to the target (\textit{approximate controllability}) in a multiplicative noise setting  fails to be captured by the (deterministic-like) drift. These properties have been addressed in \cite{Buckdahn_Quincampoix_Tessitore_2006} and \cite{G17} for finite-dimenional Brownian diffusions with constant coefficients. Finally, let us mention that various methods lead to partial results on various types of controllability in infinite dimensions (e.g. \cite{Fernandez_Cara_Garrido_atienza_99}, \cite{Sarbu_Tessitore_2001}, \cite{Barbu_Rascanu_Tessitore_2003}, \cite{LQiYongJiongminZhangXu_2012}, etc.).
In the recent papers \cite{GoreacMartinez2015} and \cite{GoreacGrosuRotenstein_2016} we have addressed the approximate controllability problem with switched dynamics (much like those presented at the very beginning). In particular, \cite{GoreacGrosuRotenstein_2016} gives a complete criterion allowing to control the system around $0$ (\textit{approximate null-controllability}). However, whenever the coefficient are not constant, approximate null-controllability fails to imply the ability to control the system to arbitrary targets (as it was the case in \cite{Buckdahn_Quincampoix_Tessitore_2006} and \cite{G17} treating time-homogeneous settings).  This paper aims at providing  the missing link between approximate null and approximate (full) controllability. \\
Let us briefly explain the general intuitions. In \cite{Peng_94}, the key ingredient in addressing the (exact) controllability problem is the so-called (exact) terminal controllability i.e. the ability to solve some BSDE with terminal datum $\xi$. Of course (again due to the general theory of BSDEs), solving 
\[X_T^u=\xi,\ dX_t^u=\pr{A_tX_t^u+B_tu_t}dt+\int_EC_t(\theta)X_t^u\tilde{q}(dtd\theta),\textnormal{ for }t\in\pp{0,T},\] cannot be achieved for every square-integrable $\xi$ (it is clear that this cannot be done for every $u$ and, often, it cannot be done for any control). However, for every (suitably measurable and integrable) $u$, one is able to solve 
\[X_T^u=\xi,\ dX_t^u=\pr{A_tX_t^u+B_tu_t}dt+\int_E\pp{C_t(\theta)X_t^u+\mathbf{Z_t^u(\theta)}}\tilde{q}(dtd\theta),\textnormal{ for }t\in\pp{0,T}.\] Then, $\xi$ is approximatly reachable (or, equivalently, the system $X$ is approximately controllable to $\xi$) if, by varying the control $u$, the noise component $Z^u(\theta)$ is close (in an $\mathbb{L}^2$ sense) to $0$. In other words, to characterize approximately reachable targets $\xi$, one looks at the level set $\set{\xi:V(T,\xi)=0}$, where \[V(T,\xi):=\inf_{u}\mathbb{E}\pp{\int_0^T\norm{Z_t^u(\theta)}^2\hat{q}(dtd\theta)}.\]Again, the class of controls over which such infimum is taken is left to be specified latter on.
We are now in the presence of a quadratic control problem with linear dynamics (thus an LQ-problem) with the small reserve that the dynamics are BSDEs (instead of the more common forward control systems). This kind of problem is not entirely new. To our best knowledge, it has been considered for the first time with Brownian dynamics in \cite{LimZhou2001} (see also some extensions to mean-field systems in \cite{LiSunXiong2017}). As usual, LQ problems are best addressed by exhibiting a convenient class of Riccati equations. We will also follow this long-established program. However (besides the difference in stochasticity), the cited papers deal with deterministic driving coefficients. As a result, their resulting Riccati equations are deterministic. This will no longer be the case for us. The Riccati system quantifying $V(T,\xi)$ is a true backward stochastic Riccati equation. A class of backward-type stochastic Riccati equations for systems governed by a marked point mechanism has been recently studied in \cite{CFGT_2018}. Although the stochastic framework is more general in \cite{CFGT_2018} (both for the compensator intervening and the presence of a Brownian term), the generator does not include the second part of the solution and does not fit the present problem. We have chosen an approach based on structural properties (cf. \cite{CFJ_2014}) and allowing to reduce the BSDE to a class of ordinary equations (of Riccati type in our case). The resulting equivalent system (\ref{RiccatiDet}) is composed of (iterated) standard deterministic Riccati equations for which qualitative properties (existence, uniqueness, positiveness, lower and upper-bounds, monotony) are investigated.\\
We begin with a description of the stochastic model, notations and definitions in Section \ref{SectionPreliminaries}. Section \ref{SectionBasicExp} is dedicated to an abstract (operator) approach to the notion of approximate terminal-controllability and some examples. In general, the notions of (approximate null and full) controllability are meant as \textit{global} (i.e. required with respect to \textit{every fixed initial datum}, see \cite{Buckdahn_Quincampoix_Tessitore_2006}, \cite{GoreacMartinez2015}, \cite{LQiYongJiongminZhangXu_2012}). With this definition, equivalence of (global) approximate and approximate null-controllability can be established in various frameworks (constant coefficients \cite{Buckdahn_Quincampoix_Tessitore_2006}, \cite{GoreacMartinez2015}, continuous switching \cite{GoreacMartinez2015}). However, approximate reachability (or approximate terminal controllability) intrinsically requires the initial data to vary. For the frameworks studied in \cite{GoreacMartinez2015}, we show, by means of examples, that approximate terminal controllability (to every target) can hold without (global) approximate null-controllability. This seems to suggest that while approximate controllability (at least for the given frameworks) is captured by deterministic-related concepts (Kalman criterion, invariance, etc.), approximate terminal controllability is a purely stochastic concept. The study of approximate terminal controllability constitutes the core of Section \ref{SectionReach}. We begin with the LQ-formulation for controlled BSDE dynamics (the function $V$ described before). In order to proceed with our Riccati analysis, a further penalization (making appear the $\mathbb{L}^2$- norm of controls $u$) of $V$ is needed. We introduce the Riccati backward stochastic differential equation (hereafter called RBSDE) of interest in equation (\ref{RBSDE_ M}). While our LQ problem involves such equations with $0$ final data, the analysis is best conducted by passage to the limit in a non-degenerate framework. Links with the LQ problem and synthesis of the optimal control are given in the first main result Theorem \ref{ThMain}. The proofs rely on the structural representations (via a system of iterated deterministic Riccati equations (\ref{RiccatiDet})) in Section \ref{SectionStruct0} (see Proposition \ref{PropReductionDetSystem}). The second main result (Theorem \ref{TheoremSolvabilityDetSyst}) provides qualitative properties (existence, uniqueness, positiveness, lower and upper-bounds, monotony) of the equations intervening in (\ref{RiccatiDet}). The inherited properties for RBSDE are stated in Corollary \ref{CorStructuralPropSol}. Finally, all the proofs are gathered in Section \ref{SectionProofs}. 
\section{Preliminaries}
\label{SectionPreliminaries}
\subsection{General Notations}
We begin with recalling some elements on a particular class of pure-jump non-explosive processes living on the space $\Omega$. These processes are assumed to take their values (referred to as \textit{modes}) in a metric space denoted $E$ (and endowed with the associated Borel $\sigma$-field $\mathcal{B}(E)$) augmented by an isolated cemetery state $\Delta$ (i.e. $\bar{E}:=E\cup\set{\Delta}$. The interested reader may also want to consult \cite{Bremaud_1981} for a point process overview (or \cite{davis_93} for a strongly connected general framework concerning piecewise deterministic Markov processes). The \textit{mode process} denoted by $\Gamma$ will be given by 
\begin{itemize}
\label{lambdaQ}
\item[i.] a continuous, bounded transition intensity $\lambda:E\times\mathbb{R}_+\longrightarrow\mathbb{R}_+$
\item[ii.] a post-jump measure $Q:E\times\mathbb{R}_+\longrightarrow\mathcal{P}(E)$ such that, for all $\pr{\gamma,t}\in E\times\mathbb{R}_+$, one has $Q(\gamma,t,\set{\gamma})=0$ (i.e. no fictive jumps are allowed). Moreover, the application $E:\ni\gamma\mapsto Q\pr{\gamma,t,\cdot}$ is weakly continuous uniformly in $t\in\mathbb{R}_+$.
\end{itemize}
Here, $\mathcal{P}(E)$ stands for the set of all probability measures on $\pr{E,\mathcal{B}(E)}$.
We fix a finite number of jumps $J>1$. Given the initial mode $\gamma_0\in E$ (and fixing the associated probability measure $\mathbb{P}^{\gamma_0}$; since $\gamma_0$ will be considered to be fixed, by abuse of notation, we will drop the superscript and write $\mathbb{P}$), the first jump denoted by $T_1$ satisfies $\mathbb{P}\pr{T_1\geq t}=e^{-\int_0^t\lambda\pr{\gamma_0,r}dr}$. The mode process is then set to be $\Gamma_t^{\gamma_0}:=\gamma_0$ on $0\leq t<T_1$. The post-jump position $\gamma_1$ is chosen using $Q$ as conditional distribution i.e. $\mathbb{P}\pr{\gamma_1\in A \mid T_1}=Q\pr{\gamma_0,T_1,A}$. If $J\geq 2$, the inter-jump time $S_2:=T_2-T_1$ is distributed as $\mathbb{P}\pr{T_2-T_1\geq t\mid \gamma_1,T_1}=e^{-\int_0^t \lambda\pr{\gamma_1,T_1+r}dr}$ and the post)jump position $\gamma_2$ is given by $\mathbb{P}\pr{\gamma_2\in A\mid \gamma_1,T_2}=Q\pr{\gamma_1,T_2,A}$. And so on. At $J$-th jump, the process is stopped by setting $\gamma_J:=\Delta$ and extending $\lambda\pr{\Delta,t}=0$.
\begin{remark}
As we will see later on, the process of interest $\pr{\Gamma,X}$ will no longer be Markovian such that the Markov construction of $\Gamma$ plays little in the arguments. Although this construction will be sufficient for the examples we have in mind, extensions to intensities (for say $n$-th jump of type) $\lambda^j\pr{\gamma_0, 0, \gamma_1,t_1,\ldots,\gamma_{j-1},r}$ and similar post-jump measures are treated with similar arguments.
\end{remark}
To such processes, we associate the filtration $\mathbb{F}$ given by $\pr{\mathcal{F}_{\pp{0,t}}:=\sigma\set{\Gamma_s^{\gamma_0} : s\in \pp{0,t}}}_{t\geq 0}$. As usual, we let $\mathcal{P}$ stand for the predictable $\sigma$-field while $\mathcal{P}rog$ will stand for the progressive $\sigma$-field. The associated random measure on $\Omega\times \mathbb{R}_+ \times E$ is given by $q\pr{\omega,A}:=\sum_{j\geq 1}\mathbbm{1}_{\pr{T_j(\omega),\Gamma_{T_j(\omega)}^{\gamma_0}}\in A}$, for all $A\in \mathcal{B}\pr{\mathbb{R}_+}\times\mathcal{B}(E)$ and all $\omega\in\Omega$. The compensator of $q$ will be denoted by $\hat{q}(dt,d\theta)$. Finally, we will use the compensated measure $\tilde{q}:=q-\hat{q}$.
We will make use of the following notations 
\begin{itemize}
\item Throughout the paper, unless stated otherwise, $T>0$ will be a \textit{fixed finite time horizon}.
\item For a (generic) Euclidean space $\mathcal{E}$, we denote by $\scal{\cdot,\cdot}$ its \textit{scalar product} and by $\norm{\cdot}$ the induced \textit{norm}. Thus, if $n,m$ are positive integers,
\item $\mathbb{R}^n$ will stand for the \textit{standard n-dimensional Euclidean space} endowed with the usual Euclidean structure;
\item $\mathbb{R}^{n\times m}$ stands for the \textit{space of $n\times m$-type matrices} endowed with the scalar product $\scal{\alpha,\beta}:=Tr\pp{\beta^*\alpha}$, where $\cdot^*$ stands for \textit{transposition} and $Tr$ for the \textit{trace operator};
\item For square matrices (i.e. $m=n$), we denote by $I_n$ the (diagonal) identity matrix and by $0_n$ the matrix whose elements are $0$. Moreover, we let $\mathcal{S}^n$ be the space of symmetric square matrices and $\mathcal{S}^n_+$ the space of symmetric positive semi-definite square matrices.
\item For a matrix $A\in \mathbb{R}^{n\times m}$, we will denote by $Rank[A]$ its \textit{rank} (i.e. the dimension of the space spanned by its columns), by $ker\pr{A}$ its \textit{kernel} i.e. $ker\pr{A}:=\set{x\in\mathbb{R}^m:\ Ax=0}$;
\item if $\pp{t_1,t_2}\subset\mathbb{R}_+$, then $\mathbb{D}\pr{\pp{t_1,t_2};\mathcal{E}}$ stands for the space of càdlàg functions $\phi$ endowed with the supremum norm $\norm{\phi}_0:=\sup_{t\in \pp{t_1,t_2}}\norm{\phi(t)}$;
\item if $\pp{t_1,t_2}\subset\mathbb{R}_+$, then $\mathbb{C}\pr{\pp{t_1,t_2};\mathcal{E}}$ stands for the space of continuous functions $\phi$ endowed with the supremum norm.
\end{itemize}
Considering a finite (fixed) time horizon $T>0$, some $\pp{t_1,t_2}\subset \pp{0,T}$ and for $p\in\left[1,\infty\right]$,
\begin{itemize}
\item the space $\mathbb{L}^p\pr{\Omega,\mathcal{F}_{\pp{0,t_1}},\mathbb{P};\mathcal{E}}$ stands for the usual $p$-integrable space of equivalence classes admitting an $\mathcal{F}_{\pp{0,t_1}}$-measurable representative endowed with the usual norms $\norm{\phi}_p^p:=\mathbb{E}\pp{\norm{\phi}^p}$ (resp. $\norm{\phi}_{\infty}:=\textnormal{esssup}_{\omega\in \Omega}\norm{\phi(\omega)}$);
\item the space $\mathbb{L}^p\pr{\Omega\times \pp{t_1,t_2},\mathcal{P},\mathbb{P}\times\mathcal{L}eb;\mathcal{E}}$ stands for the usual $p$-integrable space endowed with the usual norms $\norm{\Phi}_p^p:=\mathbb{E}\pp{\int_{t_1}^{t_2} \norm{\Phi_r}^p dr}$ (resp. $\norm{\Phi}_{\infty}:=\textnormal{esssup}_{\pr{\omega,s}\in \Omega\times \pp{t_1,t_2} }\norm{\Phi_s(\omega)}$);
\item the space of admissible $d$-dimensional control processes on $\pp{0,T}$ will be \[\mathcal{U}:=\mathbb{L}^2\pr{\Omega\times \pp{0,T},\mathcal{P},\mathbb{P}\times\mathcal{L}eb;\mathbb{R}^d};\]
\item the space $\mathbb{L}^p\pr{\Omega;\mathbb{D}\pr{\pp{t_1,t_2};\mathcal{E}}}$ is to be understood as (classes of) progressively measurable processes having càdlàg paths and endowed with finite adequate norm i.e. \\$\norm{\Phi}_{\mathbb{L}^p\pr{\Omega;\mathbb{D}\pr{\pp{t_1,t_2};\mathcal{E}}}}^p:=\mathbb{E}\pp{\norm{\Phi_\cdot}_0^p}$ (resp. $\norm{\Phi}_{\mathbb{L}^{\infty}\pr{\Omega;\mathbb{D}\pr{\pp{t_1,t_2};\mathcal{E}}}}:=\textnormal{esssup}_{\omega\in\Omega}\pp{\norm{\Phi_\cdot(\omega)}_0}$.
\item the space $\mathbb{L}^p\pr{\Omega;\mathbb{C}\pr{\pp{t_1,t_2};\mathcal{E}}}$ is to be understood as a subspace of $\Phi\in\mathbb{L}^p\pr{\Omega\times [0,t],\mathcal{P},\mathbb{P}\times\mathcal{L}eb;\mathcal{E}}$ having continuous paths and endowed with finite adequate norm i.e. \\$\norm{\Phi}_{\mathbb{L}^p\pr{\Omega;\mathbb{C}\pr{\pp{t_1,t_2};\mathcal{E}}}}^p:=\mathbb{E}\pp{\norm{\Phi_\cdot}_0^p}$ (resp. $\norm{\Phi}_{\mathbb{L}^{\infty}\pr{\Omega;\mathbb{C}\pr{\pp{t_1,t_2};\mathcal{E}}}}:=\textnormal{esssup}_{\omega\in\Omega}\pp{\norm{\Phi_\cdot(\omega)}_0}$;
\item the space $\mathbb{L}^p\pr{\Omega\times \pp{t_1,t_2}\times E,q;\mathcal{E}}$ is the space of all (classes of) processes $\Phi:\Omega\times \pp{t_1,t_2}\times E\longrightarrow \mathcal{E}$ that are $\mathcal{P}\otimes\mathcal{B}(E)$-measurable (i.e. \textit{predictable}) endowed with the norm $\norm{\Phi}_{\mathbb{L}^p\pr{\Omega\times \pp{t_1,t_2}\times E,q;\mathcal{E}}}^p:=\mathbb{E}\pp{\int_{\pp{t_1,t_2}\times E}\norm{\Phi_r\pr{\theta}}^pq(dr,d\theta)}<\infty.$
\end{itemize}
\subsection{Controlled Systems. Controllabilities}
We consider the forward controlled piecewise linear switched system
\begin{equation}\label{eq0}
\left\lbrace
\begin{split}
dX_{r}^{x,u}&=\left(A_{r}X_{r}^{x,u}+B_{r}u_{r}\right)dr+\int_{E}C_{r}(\theta)X_{r}^{x,u}\tilde{q}(dr,d\theta),\textnormal{ for all }r\geq0,\\
X_{0}^{x,u}&=x\in\mathbb{R}^n.
\end{split}
\right.
\end{equation}
The coefficients are assumed to satisfy some standard regularity (measurability and integrability) properties.
\begin{assumption}
\label{AssCoeff}Throughout the paper, unless stated otherwise, we assume that
\begin{equation} 
\begin{split}
&A\in \mathbb{L}^\infty\pr{\Omega\times \pp{0,T},\mathcal{P},\mathbb{P}\times\mathcal{L}eb;\mathbb{R}^{n\times n}};\\ 
&B\in \mathbb{L}^\infty\pr{\Omega\times \pp{0,T},\mathcal{P},\mathbb{P}\times\mathcal{L}eb;\mathbb{R}^{n\times d}};\\
&C\in \mathbb{L}^2\pr{\Omega\times \pp{t_1,t_2}\times E,q;\mathbb{R}^{n\times n}}\textnormal{ and }\norm{C}_\infty:=\textnormal{esssup}_{\omega\in\Omega}\sup_{r\in\pp{0,T},\theta\in E}\norm{C_r\pr{\omega,\theta}}<\infty.
\end{split}
\end{equation}
We denote by $\norm{A}_\infty:=\norm{A}_{\mathbb{L}^\infty\pr{\Omega\times \pp{0,T},\mathcal{P},\mathbb{P}\times\mathcal{L}eb;\mathbb{R}^{n\times n}}},\ \norm{B}_\infty:=\norm{B}_{\mathbb{L}^\infty\pr{\Omega\times \pp{0,T},\mathcal{P},\mathbb{P}\times\mathcal{L}eb;\mathbb{R}^{n\times d}}}.$
\end{assumption}
\begin{remark}
To cope with the finite number of jumps, we assume $A_t=0_{n},\ B_t=0_{n\times d},\textnormal{ and }C_t(\cdot)=0_n,\textnormal{ on }\set{t>T_J}.$
\end{remark}
We recall that a process $u$ is said to be \textit{admissible} (up to the fixed time horizon $T>0$) if \[u\in \mathbb{L}^2\pr{\Omega\times \pp{0,T},\mathcal{P},\mathbb{P}\times\mathcal{L}eb;\mathbb{R}^d}.\] To simplify notations in the infimum expressions, this space (of admissible controls) is also denoted by $\mathcal{U}$. A solution to (\ref{eq0}) is a process $X^{x,u}\in\mathbb{L}^2\pr{\Omega;\mathbb{D}\pr{\pp{0,T};\mathbb{R}^n}}$ such that, $\mathbb{P}$-a.s., \[X_t^{x,u}=\int_0^t\pr{A_r X_r^{x,u}+B_r u_r}dr+\int_E\int_0^tC_r(\theta)X_r^{x,u}\tilde{q}(dr,d\theta),\textnormal{ for all }t\in\pp{0,T}.\]Standard results on SDEs (e.g. in \cite{Ikeda_Watanabe_1981}) yield, for every initial datum $x\in\mathbb{R}^n$ and every admissible $u$, the existence and uniqueness of the solution $X_\cdot^{x,u}$. Moreover, for some generic constant $c$ (depending only on $T,\ \norm{A}_\infty,\ \norm{B}_\infty,\ \norm{C}_\infty$ but neither of $u$ nor of $x$) such that \[\mathbb{E}\pp{\sup_{s\in\pp{0,t}}\norm{X_s^{x,u}}^2}\leq c\pp{\norm{x}^2+\mathbb{E}\pp{\int_0^t\norm{u_s}^2ds}}.\]
We recall (or introduce) the following controllability notions constituting the core of our paper. We only focus on the weakest notions of \textit{approximate} controllability. Absence of (global) exact or exact terminal controllability for control-free noise coefficients (as it is our case) go back to \cite{Merton_76} respectively \cite{Peng_94}.
\begin{definition}
\label{DefCtrl}
\begin{itemize}
\item[i. ]The system (\ref{eq0}) is \textbf{approximately controllable in time $T$ from $x\in\mathbb{R}^n$ to $\xi\in \mathbb{L}^2\pr{\Omega,\mathcal{F}_{\pp{0,T}},\mathbb{P};\mathbb{R}^n}$} if, for every $\varepsilon>0$, there exists an admissible control $u^\varepsilon$ such that \[\norm{X_T^{x,u^\varepsilon}-\xi}_{\mathbb{L}^2\pr{\Omega,\mathcal{F}_{\pp{0,T}},\mathbb{P};\mathbb{R}^n}}\leq \varepsilon.\] If the condition holds true from every initial datum to every final random variable, the is said to be \textbf{approximately controllable in time $T$}.
\item[ii. ]The system (\ref{eq0}) is \textbf{approximately null-controllable in time $T$ from $x\in\mathbb{R}^n$} if it is approximately controllable in time $T$ from $x$ to $\xi=0$. It is \textbf{approximately null-controllable in time $T$} if the above holds for every $x\in \mathbb{R}^n$.
\item[iii. ]The system (\ref{eq0}) is \textbf{approximately terminal controllable to $\xi\in \mathbb{L}^2\pr{\Omega,\mathcal{F}_{\pp{0,T}},\mathbb{P};\mathbb{R}^n}$} if, for every $\varepsilon>0$, there exists an initial datum $x^\varepsilon\in\mathbb{R}^n$ and an admissible control $u^\varepsilon$ such that, $\norm{X_T^{x^\varepsilon,u^\varepsilon}-\xi}_{\mathbb{L}^2\pr{\Omega,\mathcal{F}_{\pp{0,T}},\mathbb{P};\mathbb{R}^n}}\leq \varepsilon$. Finally, if this condition holds for every $\xi\in \mathbb{L}^2\pr{\Omega,\mathcal{F}_{\pp{0,T}},\mathbb{P};\mathbb{R}^n}$, the system (\ref{eq0}) is said to be \textbf{approximately terminal controllable}.
\item[iv. ]Alternatively, we will say that $\xi\in \mathbb{L}^2\pr{\Omega,\mathcal{F}_{\pp{0,T}},\mathbb{P};\mathbb{R}^n}$ is \textbf{approximately reachable} whenever (\ref{eq0}) is approximately terminal controllable to $\xi$.
\end{itemize}
\end{definition}
\section{Basic Results and Examples}
\label{SectionBasicExp}
The aim of this section is twofold. First, we state some abstract results characterizing the controllability concepts. As consequence, approximate terminal controllability is proven to provide the missing link between approximate null and approximate controllability (required for all initial data). Second, we give some examples illustrating the stochastic essence of the concept.
\subsection{Range/Kernel Approach and Abstract Results}
It is by now a classical approach to consider the following linear controllability operators 
\begin{equation}
\left\lbrace
\begin{split}
L_{1}:\mathcal{U}\longrightarrow \mathbb{L}^2\pr{\Omega,\mathcal{F}_{\pp{0,T}},\mathbb{P};\mathbb{R}^n},&\textnormal{  given by }L_1(u):=X_T^{0,u} \textnormal{ the solution of }(\ref{eq0})\\ &\textnormal{ starting at $0$ and using the control $u$ and}\\
L_2:\mathbb{R}^n\longrightarrow \mathbb{L}^2\pr{\Omega,\mathcal{F}_{\pp{0,T}},\mathbb{P};\mathbb{R}^n},&\textnormal{  given by }L_2(x):=X_T^{x,0} \textnormal{ the solution of }(\ref{eq0}) \\&\textnormal{ starting from $x$, with $0$ control.}
\end{split}
\right.
\end{equation}
\begin{remark}
Using these operators and $L:=\left[L_1 \ L_2\right]$ (defined on $\mathcal{U}\times\mathbb{R}^n$ and given by $L(u,x):=L_1u+L_2x$ if $\pr{u,x}\in \mathcal{U}\times\mathbb{R}^n$), the notions of controllability in Definition \ref{DefCtrl} become respectively
\begin{itemize}
\item[i. ]$\xi-L_2x\in cl\pr{Range\pr{L_1}}$ where $cl$ is the Kuratowski closure operator with respect to the topology of $\mathbb{L}^2\pr{\Omega,\mathcal{F}_{\pp{0,T}},\mathbb{P};\mathbb{R}^n}$;
\item[ii. ]$L_2x\in cl\pr{Range\pr{L_1}}$;
\item[iii. ]$\xi\in cl\pr{Range\pr{L}}$ (again closure is intended with respect to $\mathbb{L}^2\pr{\Omega,\mathcal{F}_{\pp{0,T}},\mathbb{P};\mathbb{R}^n}$).
\end{itemize}
\end{remark}
It is obvious that all these properties can equally be given with respect to the dual operator of $L$. This dual turns out to be strongly connected to the solution of the following backward (dual) stochastic differential equation.
\begin{equation}\label{DualBSDE0}
\begin{cases}
d\mathcal{X}_t^{T,\zeta}=-A_t^*\mathcal{X}_t^{T,\zeta}dt-\int_EC_t^*(\theta)\mathcal{Z}_t^{T,\zeta}(\theta)\hat{q}(dt,d\theta)+\int_E\mathcal{Z}_t^{T,\zeta}(\theta)\tilde{q}(dt,d\theta), \textnormal{for all }0\leq t\leq T,\\
\mathcal{X}_T^{T,\zeta}=\zeta\in\mathbb{L}^2\pr{\Omega,\mathcal{F}_{\pp{0,T}},\mathbb{P};\mathbb{R}^n} .
\end{cases}
\end{equation}
We recall (following \cite{Confortola_Fuhrman_2014}), that a solution of such an equation consists of a couple \[\pr{\mathcal{X}_\cdot^{T,\zeta},\mathcal{Z}_\cdot^{T,\zeta}(\theta)}\in \mathbb{L}^2\pr{\Omega;\mathbb{D}\pr{\pp{0,T};\mathbb{R}^n}}\times\mathbb{L}^2\pr{\Omega\times \pp{0,T}\times E,q;\mathbb{R}^n}\] satisfying, $\mathbb{P}\times \mathcal{L}eb$-a.s.
\begin{equation*}
\zeta=\mathcal{X}_t^{T,\zeta}-\int_t^TA_r^*\mathcal{X}_r^{T,\zeta}dr-\int_t^T\int_EC_r^*(\theta)\mathcal{Z}_r^{T,\zeta}(\theta)\hat{q}(dr,d\theta)+\int_t^T\int_E\mathcal{Z}_r^{T,\zeta}(\theta)\tilde{q}(dr,d\theta).
\end{equation*}Existence and uniqueness follow from \cite{Confortola_Fuhrman_2014} under the assumption \ref{AssCoeff}.\\
Having given these details on the backward stochastic system (\ref{DualBSDE0}), the reader is invited to note the following.  The dual operator of $L$ is given by \[L^*:\mathbb{L}^2\pr{\Omega,\mathcal{F}_{\pp{0,T}},\mathbb{P};\mathbb{R}^n}\longrightarrow\mathcal{U}\times\mathbb{R}^n, \ L^*=\left(\begin{array}{c}L_1^* \\ L_2^*\end{array}\right),\textnormal{ where } L_1(\zeta)=\left(B_t^*\mathcal{X}_t^{T,\zeta}\right)_{0\leq t \leq T}\textnormal{ and }L_2(\zeta)=\mathcal{X}_0^{T,\zeta},\]
for all $\zeta\in\mathbb{L}^2\pr{\Omega,\mathcal{F}_{\pp{0,T}},\mathbb{P};\mathbb{R}^n}$. This assertion is quite classical and its proof relies on a mere application of Itô's formula on $[0,T]$ to the Euclidian product $\left\langle X_{\cdot}^{x,u},\mathcal{X}_{\cdot}^{T,\zeta}\right\rangle$ (the interested reader may take a look at \cite[proof of Theorem 1]{GoreacMartinez2015}. 
As consequence, we get the following result.
\begin{proposition}\label{PropATC_abstract}
\begin{itemize}
\item[1] \cite[Theorem 1]{GoreacMartinez2015}. The system $(\ref{eq0})$ is \\
- approximately controllable in time $T$ if and only if the only solution to $(\ref{DualBSDE0})$ satisfying $
B_t^*\mathcal{X}_t^{T,\zeta}=0, \mathbb{P}\times\mathcal{L}eb\textnormal{-a.s. on }\Omega\times[0,T]
$
is the trivial (zero) solution;\\
- approximately null-controllable in time $T$ if and only if any solution to $(\ref{DualBSDE0})$ satisfying $
B_t^*\mathcal{X}_t^{T,\zeta}=0, \mathbb{P}\times\mathcal{L}eb\textnormal{-a.s. on }\Omega\times[0,T]
$ satisfies, $\mathbb{P}-a.s.$, $\mathcal{X}_0^{T,\zeta}=0$.\\
\item[2. ]The system $(\ref{eq0})$ is approximately terminal controllable to the target $\xi\in\mathbb{L}^2\left(\Omega,\mathcal{F}_{\pp{0,T}},\mathbb{P};\mathbb{R}^n\right)$ if and only if any solution to $(\ref{DualBSDE0})$ satisfying $
B_t^*\mathcal{X}_t^{T,\zeta}=0, \mathbb{P}\times\mathcal{L}eb\textnormal{-a.s. on }\Omega\times[0,T]$ and $\mathcal{X}_0^{T,\zeta}=0$
also satisfies $\mathbb{E}\left[\left\langle\zeta,\xi\right\rangle\right]=0.$
\item[3. ] In particular, the system $(\ref{eq0})$ is approximately terminal controllable to every target if and only if any solution to $(\ref{DualBSDE0})$ satisfying $B_t^*\mathcal{X}_t^{T,\zeta}=0, \mathbb{P}\times\mathcal{L}eb\textnormal{-a.s. on }\Omega\times[0,T]$ and $\mathcal{X}_0^{T,\zeta}=0$ also satisfies $\zeta=0$, $\mathbb{P}$-a.s. on $\Omega$.
\end{itemize}
\end{proposition}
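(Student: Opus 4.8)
The plan is to derive the statement directly from part 2, the phrase ``in particular'' signalling that no new machinery is required. Part 2 asserts that, for a \emph{fixed} target $\xi$, approximate terminal controllability to $\xi$ is equivalent to the orthogonality relation $\mathbb{E}\pp{\scal{\zeta,\xi}}=0$ holding for every terminal datum $\zeta$ whose dual solution $\pr{\mathcal{X}^{T,\zeta},\mathcal{Z}^{T,\zeta}}$ satisfies both $B_t^*\mathcal{X}_t^{T,\zeta}=0$ on $\Omega\times\pp{0,T}$ and $\mathcal{X}_0^{T,\zeta}=0$. Let me write $\mathcal{N}$ for the set of admissible terminal data $\zeta\in\mathbb{L}^2\pr{\Omega,\mathcal{F}_{\pp{0,T}},\mathbb{P};\mathbb{R}^n}$ satisfying these two kernel conditions. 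Approximate terminal controllability to \emph{every} target then amounts to requiring the part 2 criterion to hold for all $\xi$ simultaneously, and the goal is to show this is equivalent to $\mathcal{N}=\set{0}$.

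For the forward implication I would assume the system approximately terminal controllable to every target and fix an arbitrary $\zeta\in\mathcal{N}$. Since $\zeta$ itself is an admissible target, part 2 applies with the self-dual choice $\xi=\zeta$, and its conclusion reads $\mathbb{E}\pp{\scal{\zeta,\zeta}}=\mathbb{E}\pp{\norm{\zeta}^2}=0$; hence $\zeta=0$, $\mathbb{P}$-a.s. As $\zeta\in\mathcal{N}$ was arbitrary, this yields $\mathcal{N}=\set{0}$. The converse is even more immediate: assuming $\mathcal{N}=\set{0}$, fix any target $\xi$; the only $\zeta$ meeting the hypothesis of the part 2 criterion is $\zeta=0$, for which $\mathbb{E}\pp{\scal{0,\xi}}=0$ trivially. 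The criterion of part 2 is thus satisfied, the system is approximately terminal controllable to $\xi$, and since $\xi$ was arbitrary the conclusion follows.

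The single genuine idea here is the self-dual test choice $\xi=\zeta$, which collapses the whole family of orthogonality relations (indexed by $\xi$) into one norm identity, so I do not expect any real obstacle. At a structural level this is nothing but the Hilbert-space identity $cl\pr{Range\pr{L}}=\pr{\ker L^*}^\perp$ for $L=\pp{L_1\ L_2}$: controllability to every target says $Range\pr{L}$ is dense, equivalently $\ker L^*=\set{0}$, and by the description of $L^*$ recalled before the proposition the condition $L^*\zeta=0$ is exactly $B_t^*\mathcal{X}_t^{T,\zeta}=0$ together with $\mathcal{X}_0^{T,\zeta}=0$. The only point deserving care is that $\zeta$ ranges over $\mathcal{F}_{\pp{0,T}}$-measurable square-integrable data, which coincides with the target space, making the choice $\xi=\zeta$ legitimate and ruling out any measurability mismatch.
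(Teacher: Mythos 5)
Your derivation of assertion 3 from assertion 2 is correct: the self-dual choice $\xi=\zeta$ is exactly what the paper's ``In particular'' amounts to, and your converse direction (vacuous satisfaction of the criterion when the only admissible $\zeta$ is $0$) is fine. The genuine gap lies elsewhere: assertion 2 is itself part of the statement, and you take it as given. The paper states explicitly that assertions 2 and 3 are the ones requiring proof (assertion 1 being quoted from prior work), and its proof is devoted precisely to assertion 2. There, approximate terminal controllability to $\xi$ is rephrased as $Range\pr{L'}\subset cl\pr{Range\pr{L}}$ for the auxiliary operator $L'(r):=r\xi$, $r\in\mathbb{R}$, which by duality for closed subspaces is equivalent to the inclusion of kernels $\ker\pr{L^*}\subset\ker\pr{\pr{L'}^*}$; identifying $L^*\zeta=\pr{B_\cdot^*\mathcal{X}_\cdot^{T,\zeta},\mathcal{X}_0^{T,\zeta}}$ (via It\^o's formula applied to $\scal{X_\cdot^{x,u},\mathcal{X}_\cdot^{T,\zeta}}$) and $\pr{L'}^*\zeta=\mathbb{E}\pp{\scal{\zeta,\xi}}$ then yields assertion 2. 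As written, your proof of part 3 therefore rests on an unproved premise, and the substantive content of the proposition is missing.

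That said, the gap is small in view of your closing paragraph: the identity $cl\pr{Range\pr{L}}=\pr{\ker L^*}^\perp$ that you invoke for part 3, applied to the single vector $\xi$ rather than to the whole space, says precisely that $\xi\in cl\pr{Range\pr{L}}$ if and only if $\mathbb{E}\pp{\scal{\zeta,\xi}}=0$ for every $\zeta\in\ker L^*$ --- which, after the identification of $L^*$ through the dual BSDE, \emph{is} assertion 2. Had you stated this one line, your argument would in fact be marginally more direct than the paper's, which detours through $L'$ and kernel inclusions. As it stands, you have proved the easy half of the proposition and only gestured at the machinery needed for the other half.
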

We only need to prove assertions 2 and 3. The proof is quite standard and postponed to Section \ref{SectionProofs}.\\
It has already been shown that approximate controllability is, in general, strictly stronger than approximate null-controllability (cf. \cite[Example 9]{GoreacGrosuRotenstein_2016}). As a simple consequence of Proposition \ref{PropATC_abstract}, it turns out that the missing link between (global) approximate controllability and terminal null-controllability is the notion of approximate terminal controllability.
\begin{cor}
\label{CorAbstractAppTerminalCtrl}
The system  $(\ref{eq0})$ is approximately controllable (starting from every $x\in\mathbb{R}^n$ to every target $\xi\in\mathbb{L}^2\left(\Omega,\mathcal{F}_{\pp{0,T}},\mathbb{P};\mathbb{R}^n\right)$) if and only if the following two assertions hold simultaneously:\\
(i) the system $(\ref{eq0})$ is approximately terminal controllable to every target $\xi\in\mathbb{L}^2\left(\Omega,\mathcal{F}_{T},\mathbb{P};\mathbb{R}^n\right)$ and\\
(ii) it is also approximately null-controllable (starting from every $x\in\mathbb{R}^n$).
\end{cor}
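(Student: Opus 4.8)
The plan is to translate each of the three controllability notions appearing in the statement into the corresponding dual condition on solutions of the backward stochastic system $(\ref{DualBSDE0})$ furnished by Proposition \ref{PropATC_abstract}, and then to observe that the asserted equivalence collapses to an elementary piece of propositional logic. To this end I would first fix, once and for all, the family
\[
\mathcal{K}:=\set{\pr{\mathcal{X}^{T,\zeta},\mathcal{Z}^{T,\zeta}}\textnormal{ solving }(\ref{DualBSDE0}):\ B_t^*\mathcal{X}_t^{T,\zeta}=0,\ \mathbb{P}\times\mathcal{L}eb\textnormal{-a.s. on }\Omega\times\pp{0,T}}
\]
of those dual solutions whose $B^*$-observation vanishes. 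With this notation, Proposition \ref{PropATC_abstract}(1) reads: $(\ref{eq0})$ is approximately controllable if and only if every element of $\mathcal{K}$ is trivial, equivalently (by existence and uniqueness for $(\ref{DualBSDE0})$) if and only if $\zeta=0$, $\mathbb{P}$-a.s., for every element of $\mathcal{K}$; it is approximately null-controllable if and only if $\mathcal{X}_0^{T,\zeta}=0$ for every element of $\mathcal{K}$. Likewise, Proposition \ref{PropATC_abstract}(3) reads: $(\ref{eq0})$ is approximately terminal controllable to every target if and only if every element of $\mathcal{K}$ which \emph{moreover} satisfies $\mathcal{X}_0^{T,\zeta}=0$ has $\zeta=0$, $\mathbb{P}$-a.s.

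For the direct implication I would assume approximate controllability and pick an arbitrary element of $\mathcal{K}$. By the first reading above we already have $\zeta=0$, hence, by uniqueness for $(\ref{DualBSDE0})$, the whole pair $\pr{\mathcal{X}^{T,\zeta},\mathcal{Z}^{T,\zeta}}$ vanishes and in particular $\mathcal{X}_0^{T,\zeta}=0$; since the element of $\mathcal{K}$ was arbitrary, this is exactly assertion (ii). Moreover, the conclusion $\zeta=0$ holds for \emph{every} element of $\mathcal{K}$, so it holds a fortiori for those that additionally satisfy $\mathcal{X}_0^{T,\zeta}=0$, which is precisely the dual characterization of assertion (i).

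For the converse I would assume (i) and (ii) and again take an arbitrary element of $\mathcal{K}$. Assumption (ii), read through Proposition \ref{PropATC_abstract}(1), forces $\mathcal{X}_0^{T,\zeta}=0$; the element of $\mathcal{K}$ then meets both hypotheses of the terminal-controllability criterion, so assumption (i), read through Proposition \ref{PropATC_abstract}(3), yields $\zeta=0$, $\mathbb{P}$-a.s. As the element of $\mathcal{K}$ was arbitrary, this is exactly the dual characterization of approximate controllability, and one concludes by Proposition \ref{PropATC_abstract}(1).

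The only point requiring care — rather than a genuine obstacle — is the bookkeeping of the quantifiers: here the target $\xi$ and the initial datum $x$ range over all of $\mathbb{L}^2\pr{\Omega,\mathcal{F}_{\pp{0,T}},\mathbb{P};\mathbb{R}^n}$ and all of $\mathbb{R}^n$ respectively, so that the three notions involved are the \emph{global} ones, matching precisely the scope of the characterizations in Proposition \ref{PropATC_abstract}; and the identification of the ``trivial solution'' of $(\ref{DualBSDE0})$ with the single condition $\zeta=0$ rests on the well-posedness of the dual BSDE recalled right after $(\ref{DualBSDE0})$. No estimate, Riccati equation, or further probabilistic input is needed: everything follows from the range/kernel duality already established and a chaining of the three implications $\set{B^*\mathcal{X}=0}\Rightarrow\set{\mathcal{X}_0=0}\Rightarrow\set{\zeta=0}$.
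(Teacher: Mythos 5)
Your proof is correct and follows exactly the route the paper intends: the corollary is stated there as an immediate consequence of Proposition \ref{PropATC_abstract}, obtained by chaining the three dual characterizations of the solutions of $(\ref{DualBSDE0})$ with vanishing $B^*\mathcal{X}$, which is precisely your argument (including the use of uniqueness for the dual BSDE to identify the trivial solution with the condition $\zeta=0$). You have merely written out in full the quantifier bookkeeping that the paper leaves implicit.
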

\begin{remark}
\item[i.] In fact, one can prove a slightly stronger condition concerning the sufficiency. If (ii) holds true then the system  $(\ref{eq0})$ is  approximately controllable to a given target $\xi\in\mathbb{L}^2\left(\Omega,\mathcal{F}_{T},\mathbb{P};\mathbb{R}^n\right)$ (starting from every $x$) if and only if the system $(\ref{eq0})$ is approximately terminal controllable to the target $\xi$ starting from some $x_0\in\mathbb{R}^n$.
\item[ii.] As we have already mentioned, \cite[Example 9]{GoreacGrosuRotenstein_2016} provides an example of approximately null-controllable system that is not approximately controllable (or, in other words, not approximately terminal controllable to some directions specified in this example).
\end{remark}
\subsection{Approximate Terminal Controllability, A Purely Sochastic Concept}
Let us begin with a simple remark in the deterministic framework (i.e. whenever no switch occurs) with constant coefficients. In this case, the solution of (\ref{eq0}) is explicitly given by \[X_t^{x,u}=e^{At}x+\int_0^te^{A(t-s)}Bu_sds.\] The dual component satisfies $\mathcal{X}_t^{T,\zeta}=e^{A^*(T-t)}\zeta,\textnormal{ where }\zeta\in\mathbb{R}^n.$ The approximate terminal controllability in this case becomes $\ker\pp{e^{A^*T}}\cap\pr{\cap_{0\leq k\leq n-1}\ker \pp{B^*\pr{A^*}^k}}\subset\set{x\in\mathbb{R}^n:\scal{x,\xi}=0}$. Since $e^{A^*T}$ is invertible (for all $T>0$), $\ker\pp{e^{A^*T}}$ reduces to $0$ and it follows that any such system is approximately terminal controllable. (In fact, it suffices to take $x:=e^{-AT}\xi,\ u=0$ to drive the solution to $\xi\in\mathbb{R}^n$.) This explains why approximate terminal controllability is a \textit{purely stochastic} concept.\\
Let us now consider the simplest (Markovian) case, in which the coefficients are given as deterministic functions of the mode (i.e. $A_t=A\pr{\Gamma_t},\ B_t=B\pr{\Gamma_t},\ C_t\pr{\cdot}=C\pr{\Gamma_t,\cdot}$).\footnote{The reader is invited to note that we make a slight abuse of notation by considering a measurable function $A: E \longrightarrow \mathbb{R}$ and then setting $A(\omega,t):=A\pr{\Gamma_t(\omega)}$. This kind of abuse of notation will be employed several times in the sequel (particularly for examples).} 
\subsubsection{Continuous Switching}
 If $C=0$, the system has no jump affecting the $X$ component. This corresponds to randomly switching the linear (deterministic) systems. Since this case is a (very) slight generalization of the deterministic setting, one is entitled to ask if (at least) deterministic targets can be approached with every such system. The negative answer is provided by the following.
 \begin{example}
 \label{Exp1}
We consider a one-dimensional state and control space (i.e. $n=d=1$), the mode space $E=\set{0,1}$, the transition intensity $\lambda=1$ and the transition probability $Q\pr{\gamma,\set{1-\gamma}}=1$, for $\gamma\in E$. The coefficients are given by $A(\gamma)=\gamma,\ B=0$. The explicit solution (note that $u$ has no impact) of (\ref{eq0}) using the initial mode $\gamma_0=0$ is \[X_t^x=x\pr{\mathbbm{1}_{T_1\geq t}+e^{\min\pr{t,T_2}-T_1}\mathbbm{1}_{T_3\geq t}+e^{\min\pr{t,T_4}-T_3+T_2-T_1}\mathbbm{1}_{T_5\geq t}+\ldots}\textnormal{, for }t\geq 0.\]Since $\mathbb{P}\pr{T_1>T}>0$ it follows that, in order to envisage the target $\xi >0$, one starts with $x=\xi$. But, in this case, $X_t^\xi\geq\xi$ and, on $T_1\leq \frac{T}{2}\leq T\leq T_2$, one has $X_t^x\geq \xi e^\frac{T}{2}$. Since $\mathbb{P}\pr{T_1\leq \frac{T}{2},\ T\leq T_2}>0$, we get that  $X_T^\xi\neq \xi$.
 \end{example}
Second, for continuous switching systems (cf. \cite[Section 4.2]{GoreacMartinez2015}), approximate null-controllability (starting from every $x\in\mathbb{R}^n$) and approximate controllability are equivalent. The criterion is a Kalman-type condition holding true for every deterministic component of the dynamics i.e. the full rank condition \begin{equation}
\label{Kalman}
\pp{B(\gamma), A(\gamma)B(\gamma),\ldots,A^{n-1}(\gamma)B(\gamma)}=n.
\end{equation} should hold true for every $\gamma\in E$ (accessible from $\gamma_0$). However, when one can have approximate terminal controllability (to all targets) without having these deterministic conditions satisfied.  
 \begin{example}
 \label{Exp2}
We consider the state space to be $\mathbb{R}^2$, the mode space $E=\set{0,1}$, the transition intensity $\lambda=1$ and the transition probability $Q\pr{\gamma,\set{1-\gamma}}=1$, for all $\gamma\in E$ and $A(\gamma)=B(\gamma)=\gamma I_2$.
It is clear that for $\gamma=0$, the couple $\pp{A(0),B(0)}$ fails to satisfy Kalman's criterion (see (\ref{Kalman})). \\
The solution to the dual system satisfies \[d\mathcal{X}_t^{T,\zeta}=\pp{-A_t^*\mathcal{X}_t^{T,\zeta}-\mathcal{Z}_t^{T,\zeta}(1-\gamma_t)}dt+\int_E\mathcal{Z}_t^{T,\zeta}(\theta)q(dt,d\theta).\]
The reader will note that $\ker\ B^*(\gamma)=(1-\gamma)\mathbb{R}^2$. We consider the initial mode $\gamma_0=0$. Since $\mathcal{X}_{T_1}^{T,\zeta}=0,\ \mathbb{P}-a.s.$ (recall that $\mathcal{X}$ has càdlàg trajectories and $\ker B^*(1)=\set{0}$), it follows that $\mathcal{Z}_t^{T,\zeta}(0)=-\mathcal{X}_t^{T,\zeta}$ (at least prior to $T_1$). By plugging this feedback form into the equation, as soon as $\mathcal{X}_0^{T,\zeta}=0$, one gets $\mathcal{X}_t^{T,\zeta}=0$ on $\left[ 0,T_1 \right]$. On $\left[T_1,T_2\right)$, since $\ker B^*(1)=\set{0}$, one gets $\mathcal{X}_t^{T,\zeta}=0,\ \mathbb{P}-a.s.$. Thus, $\mathcal{Z}_t^{T,\zeta}=0$ on the same set. It follows that, $\mathcal{X}_{T_2}^{T,\zeta}=0,\ \mathbb{P}-a.s.$ and the arguments can be repeated up to $T$ in order to conclude that $\mathcal{X}^{T,\zeta}=0$. Owing to Corollary \ref{CorAbstractAppTerminalCtrl}, one gets the approximate terminal controllability to any given target. 
 \end{example}
 In conclusion, even for these simple systems, the notion of approximate terminal controllability is not trivial (as it is the case in a purely deterministic framework) and it is strictly weaker than approximate null-controllability.  
 \subsubsection{The Constant Coefficients Case}\label{CosntCoeffCase}
For constant coefficients $A$, $B$, $C$, approximate and approximate null-controllability (for all initial data) are also equivalent (cf. \cite[Section 4.2]{GoreacMartinez2015}). The criterion is again deterministic (reducing to invariant subspaces of the kernel $\ker\pr{B^*}$). However, even in this case, approximate terminal controllability is strictly weaker than approximate null-controllability (for all initial datum).
\begin{example} \label{Exp3}
We consider a two-dimensional state space ($n=2$), a one-dimensional control space ($d=1$) and constant coefficients given by \[A:=\begin{pmatrix}0 & 1 \\ 0 & 0 \end{pmatrix},  C=\begin{pmatrix}-1 & \frac{1}{2} \\ 0 & -1 \end{pmatrix},  B:=\begin{pmatrix}0 \\ 1 \end{pmatrix}.\] The mode space is $E:=\set{0,1}$, the intensity $\lambda=1$ and the system switches between the two states in $E$ (i.e. $Q(\gamma)=\delta_{\set{1-\gamma}}$, where $\delta_\cdot$ stands for the usual Dirac mass). 
\begin{itemize}
\item It is easy to see that the associated system is not approximately null-controllable. To this purpose, we set $\zeta:=\begin{pmatrix}
(-1)^{q(\pp{0,T}\times E)}\\0
\end{pmatrix}$. One gets a solution of (\ref{DualBSDE0}) given by the couple $\mathcal{X}_t^{T,\zeta}:=\begin{pmatrix}
(-1)^{q(\pp{0,t}\times E)}\\0
\end{pmatrix}$ and $\mathcal{Z}_t^{T,\zeta}(\theta):=2\begin{pmatrix}
(-1)^{(1+q(\left[0,t\right)\times E))}\\0
\end{pmatrix}$. By noting that $\ker B^*=span\left\lbrace\begin{pmatrix}
1\\0
\end{pmatrix}\right\rbrace$, one has found a solution of the dual system (\ref{DualBSDE0}) remaining in the $\ker B^*$ but that does not trivially reduce to ${0}$. The conclusion follows from Proposition \ref{PropATC_abstract}, assertion 1.
\item On the other hand, if one further imposes $\mathcal{X}_0^{T,\zeta}=0$ and asks that the solution of the dual system (\ref{DualBSDE0}) remains in $\ker B^*=span\left\lbrace\begin{pmatrix}
1\\0
\end{pmatrix}\right\rbrace$, it follows that $\mathcal{X}_t^{T,\zeta}=0$ on $\set{t<T_1}$. As consequence, $\mathcal{Z}_t^{T,\zeta}=\begin{pmatrix}
0\\ z_t^{T,\zeta}
\end{pmatrix}$ on $\set{t\leq T_1}$. Recalling that $\mathcal{X}_{T_1}^{T,\zeta}=\mathcal{Z}_{T_1}^{T,\zeta}\in \ker B^*=span\left\lbrace\begin{pmatrix}
1\\0
\end{pmatrix}\right\rbrace$, one deduces that $\mathcal{X}_{T_1}^{T,\zeta}=0$ and the argument can be repeated up to time $T$. Owing to Proposition \ref{PropATC_abstract} (assertion 3.), this implies that the considered system is approximately terminal controllable.
\end{itemize}
\end{example}
\section{Approximately Reachable Data}
\label{SectionReach}
\subsection{A Control Problem With BSDE Dynamics Formulation}
As we have seen, characterizing random variables that are reachable allows one to close the gap between (approximate) controllability to $0$ and approximate controllability to such targets. 
On the other hand, such terminal data $\xi\in \mathbb{L}^2\left(\Omega,\mathcal{F}_{\pp{0,T}},\mathbb{P};\mathbb{R}^n\right)$ can be exactly represented by using backward stochastic differential equations. To this purpose, let us fix the terminal time $t$ such that $0\leq t\leq T$ and consider
a further modification in a form of a standard backward stochastic differential equation by dropping the initial datum $x$ and adding a $\mathbf{Z}$ term to get
\begin{equation}
\label{BSDE1}
\left\lbrace
\begin{split}
d\mathbf{X}_{r}^{u,t,\xi}&=\left(A_{r}\mathbf{X}_{r}^{u,t,\xi}+B_{r}u_{r}\right)dr+\int_{E}C_{r}(\theta)\mathbf{X}_{r}^{u,t,\xi}\tilde{q}(dr,d\theta)+\int_{E}\mathbf{Z}_{r}^{u,t,\xi}(\theta)\tilde{q}(dr,d\theta),\textnormal{ for all }r\geq0,\\
\mathbf{X}_{t}^{u,t,\xi}&=\xi\in\mathbb{L}^2\left(\Omega,\mathcal{F}_{\pp{0,t}},\mathbb{P};\mathbb{R}^n\right).
\end{split}
\right.
\end{equation}As before, by solution to (\ref{BSDE1}) we understand a couple of processes $\pr{\mathbf{X}_{\cdot}^{u,t,\xi},\mathbf{Z}_{\cdot}^{u,t,\xi}}\in\mathbb{L}^2\pr{\Omega;\mathbb{D}\pr{\pp{0,t};\mathbb{R}^n}}\times\mathbb{L}^2\pr{\Omega\times \pp{0,t}\times E,q;\mathbb{R}^n}$ such that \[\xi=\mathbf{X}_{r}^{u,t,\xi}+\int_r^t\left(A_{s}\mathbf{X}_{s}^{u,t,\xi}+B_{s}u_{s}\right)ds+\int_r^t\int_{E}C_{s}(\theta)\mathbf{X}_{s}^{u,t,\xi}\tilde{q}(ds,d\theta)+\int_r^t\int_{E}\mathbf{Z}_{s}^{u,T,\xi}(\theta)\tilde{q}(ds,d\theta).\] Existence and uniqueness follow from \cite{CFJ_2014} under the assumption \ref{AssCoeff}.\\
At this point, it is easy to see that (approximately terminally) controlling the system (\ref{eq0}) to $\xi$ amounts to asking the system (\ref{BSDE1}) to be solvable (with final data $\xi$) and "almost $0$" component $\mathbf{Z}^{u,t,\xi}$. This justify considering the value function
\begin{equation}\label{value}
V(t,\xi):=\inf_{u \in\mathcal{U}}J(t,\xi,u),
\textnormal{ where } J(t,\xi,u):=\mathbb{E}^{\gamma_0}\left[ \int_{0}^{t}\Vert \mathbf{Z}_{r}^{u,t,\xi}(\theta)\Vert ^2\hat{q}(dr,d\theta) \right].
\end{equation}
We get the following result.
\begin{proposition}
The random datum $\xi\in\mathbb{L}^{2}\left(\Omega,\mathcal{F}_{\pp{0,t}},\mathbb{P};\mathbb{R}^{n}\right)$ is approximately reachable in time $0\leq t\leq T$ with trajectories governed by system (\ref{BSDE1})\footnote{or, equivalently, the system (\ref{BSDE1}) is approximately terminal controllable in time $t$ to $\xi$} if and only if $V(t,\xi)=0$.
\end{proposition}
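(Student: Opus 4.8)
The plan is to exploit the tight link between the backward dynamics (\ref{BSDE1}) and the forward dynamics (\ref{eq0}). The crucial structural observation is that, for any admissible control, the value $\mathbf{X}_0^{u,t,\xi}$ at the initial time is $\mathcal{F}_{\pp{0,0}}$-measurable, hence \emph{deterministic} (recall $\mathcal{F}_{\pp{0,0}}=\sigma\set{\gamma_0}$ is trivial). This furnishes a canonical candidate deterministic initial datum for the forward system, and reduces the equivalence to a controlled propagation of the $\mathbb{L}^2$-gap through linear estimates with constants uniform in the control.

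First I would prove that $V(t,\xi)=0$ implies approximate reachability. Pick a minimizing sequence $\pr{u^k}_k\subset\mathcal{U}$ with $J(t,\xi,u^k)\to 0$, and set $x^k:=\mathbf{X}_0^{u^k,t,\xi}\in\mathbb{R}^n$. Denoting by $X^{x^k,u^k}$ the forward solution of (\ref{eq0}) and writing $D^k:=\mathbf{X}^{u^k,t,\xi}-X^{x^k,u^k}$, one has $D_0^k=0$ and $D^k$ solves the linear forward SDE
\begin{equation*}
dD_r^k=A_rD_r^kdr+\int_EC_r(\theta)D_r^k\tilde{q}(dr,d\theta)+\int_E\mathbf{Z}_r^{u^k,t,\xi}(\theta)\tilde{q}(dr,d\theta),
\end{equation*}
in which $\mathbf{Z}^{u^k,t,\xi}$ plays the role of a vanishing source. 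A standard Grönwall/Itô-isometry estimate then yields $\mathbb{E}\pp{\sup_{s\leq t}\norm{D_s^k}^2}\leq c\,J(t,\xi,u^k)$ for a constant $c$ depending only on $T,\norm{A}_\infty,\norm{C}_\infty$. Since $D_t^k=\xi-X_t^{x^k,u^k}$, this gives $\norm{\xi-X_t^{x^k,u^k}}_{\mathbb{L}^2}\to 0$, i.e. $\xi$ is approximately reachable.

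For the converse, assume approximate reachability and fix $\varepsilon>0$ together with $x^\varepsilon\in\mathbb{R}^n$, $u^\varepsilon\in\mathcal{U}$ such that $\eta^\varepsilon:=X_t^{x^\varepsilon,u^\varepsilon}$ satisfies $\norm{\eta^\varepsilon-\xi}_{\mathbb{L}^2}\leq\varepsilon$. The forward trajectory $X^{x^\varepsilon,u^\varepsilon}$ together with $\mathbf{Z}\equiv 0$ solves (\ref{BSDE1}) with control $u^\varepsilon$ and terminal datum $\eta^\varepsilon$; by the uniqueness theory of \cite{CFJ_2014} it \emph{is} the solution, so $J(t,\eta^\varepsilon,u^\varepsilon)=0$. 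Exploiting linearity of (\ref{BSDE1}) in the pair $(\xi,u)$, the difference $\delta\mathbf{X}:=\mathbf{X}^{u^\varepsilon,t,\xi}-X^{x^\varepsilon,u^\varepsilon}$, $\delta\mathbf{Z}:=\mathbf{Z}^{u^\varepsilon,t,\xi}$ solves the control-free BSDE with terminal condition $\xi-\eta^\varepsilon$ (the drift terms $B_r u^\varepsilon_r$ cancel). The a priori estimate for (\ref{BSDE1}) then bounds
\begin{equation*}
\mathbb{E}\pp{\int_0^t\int_E\norm{\delta\mathbf{Z}_r(\theta)}^2\hat{q}(dr,d\theta)}\leq c\norm{\xi-\eta^\varepsilon}_{\mathbb{L}^2}^2\leq c\varepsilon^2,
\end{equation*}
whence $V(t,\xi)\leq J(t,\xi,u^\varepsilon)\leq c\varepsilon^2$. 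Since $V\geq 0$, letting $\varepsilon\downarrow 0$ gives $V(t,\xi)=0$.

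The routine ingredients (linear SDE/BSDE moment estimates, Itô isometry, Grönwall) I would only cite. The step requiring the most care is the passage between the two dynamics: one must verify that $\mathbf{X}_0^{u,t,\xi}$ is genuinely deterministic, so that it qualifies as an admissible initial datum for (\ref{eq0}), and that the forward solution of (\ref{eq0}) coincides with the (\ref{BSDE1})-solution carrying a vanishing noise component---this identification rests on the uniqueness of solutions to (\ref{BSDE1}). Everything else is a controlled propagation of the $\mathbb{L}^2$-gap, with constants uniform in the control.
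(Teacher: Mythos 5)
Your proof is correct and follows exactly the route the paper intends: the paper states this proposition without proof (it is offered as an immediate consequence of the remark that terminally controlling (\ref{eq0}) to $\xi$ amounts to solving (\ref{BSDE1}) with ``almost $0$'' component $\mathbf{Z}^{u,t,\xi}$), and your two-directional argument --- comparing the BSDE solution with the forward solution of (\ref{eq0}) started from the value $\mathbf{X}_0^{u,t,\xi}$, and conversely identifying forward trajectories as BSDE solutions with vanishing $\mathbf{Z}$ via uniqueness, with uniform-in-control linear estimates in both directions --- supplies precisely that omitted elaboration. Your key structural observation that $\mathbf{X}_0^{u,t,\xi}$ is a.s. deterministic (triviality of $\mathcal{F}_{\pp{0,0}}$ since $\gamma_0$ is fixed) is also the fact the paper itself uses implicitly in Step 0 of the proof of Theorem \ref{ThMain}, where the initial value of the controlled BSDE is treated as an initial datum $x\in\mathbb{R}^n$.
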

From now on, we will fix $t=T$. An alternate formulation giving the same value function is based on a standard penalization by artificially introducing a non-degenerate component with respect to the control process. The need for non-degeneracy is quite standard in the synthesis of optimal control using Riccati equations (approach we have chosen to adopt hereafter). To this purpose, we consider, for every $N\geq 1$ the value function(s)
\begin{equation}
\begin{split}
V^N(T,\xi):=&\inf_{u \in \mathcal{U}}J^N(T,u,\xi), \textnormal {where }\xi\in\mathbb{L}^2\left(\Omega,\mathcal{F}_{\pp{0,T}},\mathbb{P};\mathbb{R}^n\right)\textnormal{ and }\\
J^N(T,u,\xi):=&\mathbb{E}\pp{\int_0^T\norm{\mathbf{Z}_s^{u,T,\xi}}^2\hat{q}\pr{ds,d\theta}+\frac{1}{N}\int_0^T\norm{u_s}^2ds},
\end{split}
\end{equation}for every admissible control $u\in\mathcal{U}$. Then, one easily proves that 
\begin{proposition}
The limit value function obtained by minimizing the cost functionals $J^n$ as $n$ increases to infinity coincides with $V$ (i.e. \begin{equation}
\inf_{N\geq 1}V^N(T,\xi)=V(T,\xi),\textnormal{ for all }\xi\in\mathbb{L}^2\left(\Omega,\mathcal{F}_{\pp{0,T}},\mathbb{P};\mathbb{R}^n\right).
\end{equation}
\end{proposition}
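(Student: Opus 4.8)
The plan is to exploit the fact that the backward dynamics (\ref{BSDE1}) do not depend on the penalization index $N$: for a \emph{fixed} admissible control $u\in\mathcal{U}$, the associated solution component $\mathbf{Z}^{u,T,\xi}$ is one and the same in both cost functionals, so only the added quadratic penalty distinguishes $J^N$ from $J$. This yields the exact identity
\[
J^N(T,u,\xi)=J(T,\xi,u)+\frac{1}{N}\,\mathbb{E}\pp{\int_0^T\norm{u_s}^2ds},\qquad u\in\mathcal{U},\ N\geq 1.
\]
Once this is recorded, the whole statement reduces to elementary manipulations of infima, and I do not expect any genuine obstacle; the mathematical content is entirely contained in the observation above together with the finiteness of $\mathbb{E}\pp{\int_0^T\norm{u_s}^2ds}$ for admissible $u$.

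First I would establish monotonicity and the lower bound. Since $1/N$ is nonincreasing in $N$, the displayed identity shows that for each fixed $u$ the map $N\mapsto J^N(T,u,\xi)$ is nonincreasing and satisfies $J^N(T,u,\xi)\geq J(T,\xi,u)\geq V(T,\xi)$. Taking the infimum over $u\in\mathcal{U}$ gives $V^N(T,\xi)\geq V(T,\xi)$ for every $N$; moreover, for $N_1\leq N_2$ we have $J^{N_1}(T,u,\xi)\geq J^{N_2}(T,u,\xi)$ for all $u$, hence $V^{N_1}(T,\xi)\geq V^{N_2}(T,\xi)$. Thus $N\mapsto V^N(T,\xi)$ is nonincreasing, the infimum over $N$ is in fact a limit, and $\inf_{N\geq 1}V^N(T,\xi)=\lim_{N\to\infty}V^N(T,\xi)\geq V(T,\xi)$.

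For the reverse inequality I would run a standard sandwich argument. Fix $\varepsilon>0$ and choose a near-optimal control $u^\varepsilon\in\mathcal{U}$ with $J(T,\xi,u^\varepsilon)\leq V(T,\xi)+\varepsilon$. Testing the infimum defining $V^N$ against this \emph{single, fixed} control and invoking the displayed identity yields
\[
V^N(T,\xi)\leq J^N(T,u^\varepsilon,\xi)=J(T,\xi,u^\varepsilon)+\frac{1}{N}\,\mathbb{E}\pp{\int_0^T\norm{u^\varepsilon_s}^2ds}.
\]
Letting $N\to\infty$, the penalization term vanishes and $\limsup_{N}V^N(T,\xi)\leq J(T,\xi,u^\varepsilon)\leq V(T,\xi)+\varepsilon$; since $\varepsilon>0$ is arbitrary, $\inf_{N\geq 1}V^N(T,\xi)\leq V(T,\xi)$. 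Combined with the lower bound, this gives the claimed equality. The only point requiring care, and the closest thing to an obstacle, is the uniformity in the sandwich step: it is essential that $u^\varepsilon$ be selected once and held fixed while $N\to\infty$, so that $\mathbb{E}\pp{\int_0^T\norm{u^\varepsilon_s}^2ds}$ is a finite constant that does not blow up with $N$. This finiteness is precisely guaranteed by $u^\varepsilon\in\mathcal{U}=\mathbb{L}^2\pr{\Omega\times\pp{0,T},\mathcal{P},\mathbb{P}\times\mathcal{L}eb;\mathbb{R}^d}$, and no existence or stability theory beyond the already-cited well-posedness of (\ref{BSDE1}) is needed.
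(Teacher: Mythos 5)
Your proof is correct, and it is exactly the argument the paper has in mind: the paper omits the proof as ``straightforward,'' and its intended content is precisely your decomposition $J^N(T,u,\xi)=J(T,\xi,u)+\tfrac{1}{N}\,\mathbb{E}\!\left[\int_0^T\norm{u_s}^2ds\right]$ (valid because the dynamics (\ref{BSDE1}) do not involve $N$), followed by the elementary monotonicity and sandwich manipulations of infima. Your attention to fixing a single near-optimal $u^{\varepsilon}\in\mathcal{U}$ with finite $\mathbb{L}^2$-norm before letting $N\to\infty$ is the right (and only) point of care.
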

The proof is straightforward and will be omitted.
\subsection{Riccati Approach}
We consider the following equation
\begin{equation}\left\lbrace
\label{RBSDE_ M}
\begin{split}
d\Sigma_r^{M}=&\int_E\pp{\Sigma_r^{M}C_r^*(\theta)-\Theta_r^{M}(\theta)}\pr{I_n+\Sigma_r^{M}+\Theta_r^{M}(\theta)}^{-1}\pp{C_r(\theta)\Sigma_r^{M}-\Theta_r^{M}(\theta)}\hat{q}(drd\theta),\\
&+\int_E\Theta_r^{M}(\theta)\tilde{q}(drd\theta)+\pr{A_r\Sigma_r^{M}+\Sigma_r^{M}A_r^*-NB_rB_r^*}dr, \textnormal{ for all }0\leq r\leq T;\\
\Sigma_T^{M}=&M^{-1}I_n.
\end{split}
\right.
\end{equation}
These equations are understood either for $M\in\mathbb{N}$ or, with a slight abuse of notation, for $M=\infty$ (in which case, the final condition is understood to be the $0_n$ matrix). By solution of such equations we understand a couple of matrices \[\pr{\Sigma^{M},\Theta^{M}(\cdot)}\in \mathbb{L}^\infty\pr{\Omega;\mathbb{D}\pr{\pp{0,T};\mathcal{S}^n}}\times\mathbb{L}^\infty\pr{\Omega\times \pp{0,T}\times E,q;\mathcal{S}^n}\] satisfying 
\begin{itemize}
\item $\Sigma_t^{M}$ is positive definite if $M<\infty$ (and positive semi-definite if $M=\infty$) $\mathbb{P}\times \mathcal{L}eb$-a.s.,  $\Sigma_t^{M}+\Theta_t^{M}(\theta)$ is positive definite if $M<\infty$ (and positive semi-definite if $M=\infty$)$ \mathbb{P}(d\omega)\times \hat{q}(\omega, dt,d\theta)$- a.s.;
\item $\Sigma^{M}$ is continuous except, eventually, at jumping times;
\item $\mathbb{P}$-a.s., for all $0\leq T$, \begin{equation}
\label{RiccatiSigmaM}
\begin{split}
\Sigma_t^{M}=&M^{-1}I_n-\int_t^T\int_E\Theta_r^{M}(\theta)\tilde{q}(drd\theta)-\int_t^T\pr{A_r\Sigma_r^{M}+\Sigma_r^{M}A_r^*-NB_rB_r^*}dr\\
&-\int_t^T\int_E\pp{\Sigma_r^{M}C_r^*(\theta)-\Theta_r^{M}(\theta)}\pr{I_n+\Sigma_r^{M}+\Theta_r^{M}(\theta)}^{-1}\pp{C_r(\theta)\Sigma_r^{M}-\Theta_r^{M}(\theta)}\hat{q}(drd\theta).
\end{split}
\end{equation}
\end{itemize}
\begin{thm}
\label{ThMain}
Let us assume that the Riccati equation (\ref{RBSDE_ M}) admits a solution for some $M\in\mathbb{N}\cup\set{\infty}$. Then, for every $\xi\in\mathbb{L}^{2}\left(\Omega,\mathcal{F}_{\pp{0,T}},\mathbb{P};\mathbb{R}^{n}\right)$, the following assertions hold true.
\begin{itemize}
\item[1. ]The backward equation 
\begin{equation}
\label{EqEta}
\left\lbrace 
\begin{split}
d\eta_r^{M}=&A_r\eta_r^{M}dr-\int_E\pp{\Sigma_r^{M}C_r^*(\theta)-\Theta_r^{M}(\theta)}\zeta_r^{M}(\theta)\hat{q}(drd\theta)\\
&+\int_E\pr{C_r(\theta)\eta_r^M+\pr{I_n+\Sigma_r^{M}+\Theta_r^{M}(\theta)}\zeta_r^{M}(\theta)}\tilde{q}(drd\theta),\textnormal{ for all }0\leq r\leq T;\\
\eta_T^M=&-\xi.
\end{split}
\right.
\end{equation} admits a unique solution $\pr{\eta^M,\zeta^M}\in \mathbb{L}^2\pr{\Omega;\mathbb{D}\pr{\pp{0,T};\mathbb{R}^n}}\times\mathbb{L}^2\pr{\Omega\times \pp{0,T}\times E,q;\mathbb{R}^n}$ such that $\eta^M$ is continuous everywhere except, eventually, at jumping times.
\item[2. ]If $M\in\mathbb{N}$, then the value function satisfies \[V^N(T,\xi)\geq\mathbb{E}\pp{\int_0^T\int_E\scal{\pr{I+\Sigma_r^M+\Theta_r^M(\theta)}\zeta_r^M(\theta),\zeta_r^M(\theta)}\hat{q}(drd\theta)}.\]
\item[3.i.] If the the Riccati equation (\ref{RBSDE_ M}) admits a solution for every $M\in\mathbb{N}\cup\set{\infty}$, then
\[V^N(T,\xi)=\mathbb{E}\pp{\int_0^T\int_E\scal{\pr{I+\Sigma_r^\infty+\Theta_r^\infty(\theta)}\zeta_r^\infty(\theta),\zeta_r^\infty(\theta)}\hat{q}(drd\theta)}.\]
\item[3.ii.]The optimal control $u^*$ such that $V^N(T,\xi)=J^N\pr{T,\xi,u^*}$ is obtained by setting $u^*_t:=-NB_t^*Y_t$, where $Y$ is the unique $\mathbb{L}^2\pr{\Omega;\mathbb{D}\pr{\pp{0,T};\mathbb{R}^n}}$-solution  of
\begin{equation}
\label{Yoptimal}
\left\lbrace
\begin{split}
dY_t=&-A_t^*Y_t-\int_EC_t^*(\theta)\pr{\pr{I_n+\Sigma_t^\infty+\Theta_t^\infty(\theta)}^{-1}\pr{C_t(\theta)\Sigma_t^\infty-\Theta_t^\infty(\theta)}+\zeta_t^\infty(\theta)}\hat{q}(dtd\theta)\\
&+\int_E\pr{\pr{I_n+\Sigma_t^\infty+\Theta_t^\infty(\theta)}^{-1}\pr{C_t(\theta)\Sigma_t^\infty-\Theta_t^\infty(\theta)}+\zeta_t^\infty(\theta)}\tilde{q}(dtd\theta);\\
Y_0=&0;
\end{split}
\right.
\end{equation}
\end{itemize}
\end{thm}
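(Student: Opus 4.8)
The plan is to read Theorem \ref{ThMain} as a verification result for the linear--quadratic problem $V^N(T,\xi)=\inf_u J^N(T,u,\xi)$ whose state is the controlled BSDE (\ref{BSDE1}), and to use the Riccati data $(\Sigma^M,\Theta^M)$ to complete the square.

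First I would dispatch the well-posedness claim in Part 1. The positivity requirements in the definition of a solution of (\ref{RBSDE_ M}) guarantee that $I_n+\Sigma_r^M+\Theta_r^M(\theta)$ is uniformly positive definite, hence boundedly invertible, $\mathbb{P}(d\omega)\hat q(\omega,dr,d\theta)$-almost everywhere. Setting the genuine martingale integrand $\widetilde\zeta_r(\theta):=C_r(\theta)\eta_r^M+\pr{I_n+\Sigma_r^M+\Theta_r^M(\theta)}\zeta_r^M(\theta)$ and inverting this relation turns (\ref{EqEta}) into a standard linear BSDE for $(\eta^M,\widetilde\zeta)$ with $\mathbb{L}^\infty$ coefficients and square-integrable terminal datum $-\xi$; existence, uniqueness and the regularity off jump times then follow from \cite{CFJ_2014}, and $\zeta^M$ is recovered by the (bounded) inverse transformation.

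The core is a completion-of-squares computation. I would apply Itô's formula on $[0,T]$ to the quadratic--affine functional $r\mapsto \langle \Sigma_r^M \mathbf{X}_r^{u,T,\xi},\mathbf{X}_r^{u,T,\xi}\rangle+2\langle \eta_r^M,\mathbf{X}_r^{u,T,\xi}\rangle$ evaluated along the trajectory of (\ref{BSDE1}) for an arbitrary admissible $u$, together with an auxiliary Itô expansion of a pure $(\eta^M,\zeta^M)$-functional that will produce the target quantity $\mathbb{E}\pp{\int_0^T\int_E\langle(I_n+\Sigma^M+\Theta^M)\zeta^M,\zeta^M\rangle\hat q}$. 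Substituting the dynamics (\ref{RiccatiSigmaM}) of $\Sigma^M$ and (\ref{EqEta}) of $\eta^M$, the Riccati equation is engineered so that all genuinely second-order contributions in $\mathbf{X}^u$ cancel; the surviving $dr$- and $\hat q$-integrands then regroup, via the elementary identity $a^{*}Pa-2a^{*}b=(a-P^{-1}b)^{*}P(a-P^{-1}b)-b^{*}P^{-1}b$ applied with $P=I_n+\Sigma^M+\Theta^M$, into the target quantity plus the two cost pieces $\norm{\mathbf{Z}^u}^2$ and $\frac1N\norm{u}^2$ plus a sum of nonnegative squares measuring the deviation of $(\mathbf{Z}^u,u)$ from their feedback values. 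This bookkeeping is the step I expect to be the main obstacle: the jump integrals against $\hat q$, the matrix inverse $(I_n+\Sigma^M+\Theta^M)^{-1}$, and the boundary contributions at both $r=0$ (where $\mathbf{X}_0^u$ is free) and $r=T$ (where $\Sigma_T^M=M^{-1}I_n$, $\eta_T^M=-\xi$) must be tracked exactly, and all integrability justified from the $\mathbb{L}^\infty$-bounds on $(\Sigma^M,\Theta^M)$ and the a priori $\mathbb{L}^2$-estimate for $\mathbf{X}^u$.

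Once the identity $J^N(T,u,\xi)=\mathbb{E}\pp{\int_0^T\int_E\langle(I_n+\Sigma^M+\Theta^M)\zeta^M,\zeta^M\rangle\hat q}+(\text{boundary})+\mathbb{E}\pp{\int_0^T(\text{nonnegative squares})}$ is established, the remaining assertions follow by inspection. For Part 2 ($M\in\mathbb{N}$) the terminal regularization $\Sigma_T^M=M^{-1}I_n\succeq0$ makes the boundary contribution sign-definite, so discarding both it and the nonnegative squares and then taking the infimum over $u$ yields the stated lower bound. For Part 3.i ($M=\infty$, so $\Sigma_T^\infty=0_n$) the boundary contribution vanishes, and the nonnegative squares can be annihilated by a single admissible control, which upgrades the inequality to equality. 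For Part 3.ii I would identify that annihilating control: positing the affine coupling of the optimal state to a costate $Y$ and substituting it, together with (\ref{RBSDE_ M}) and (\ref{EqEta}), into (\ref{BSDE1}) reproduces exactly equation (\ref{Yoptimal}) (with $Y_0=0$) and the feedback forms $u^*_t=-NB_t^*Y_t$ and $\mathbf{Z}^*$; it then remains to check $u^*\in\mathcal{U}$, i.e.\ $Y\in\mathbb{L}^2\pr{\Omega;\mathbb{D}\pr{\pp{0,T};\mathbb{R}^n}}$, which follows from the $\mathbb{L}^\infty$-bounds on the Riccati data and the $\mathbb{L}^2$-integrability of $\zeta^\infty$, and to confirm that this $u^*$ indeed zeroes every square, giving $J^N(T,\xi,u^*)=V^N(T,\xi)$.
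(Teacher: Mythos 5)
Your Part 1 is the paper's own argument: the substitution $\bar{\zeta}_r(\theta)=C_r(\theta)\eta_r^M+\pr{I_n+\Sigma_r^{M}+\Theta_r^{M}(\theta)}\zeta_r^M(\theta)$ turns (\ref{EqEta}) into a standard linear BSDE with bounded coefficients, and your Part 3.ii construction (couple the optimal state to a costate $Y$ through $\Sigma^\infty$, read off $u^*=-NB^*Y$ and $\mathbf{Z}^*$, compute the cost by It\^o on $\scal{Y,X+\eta^\infty}$) is essentially the paper's final step. The genuine gap is the core completion-of-squares step on which Parts 2 and 3.i rest. The functional you propose, $r\mapsto\scal{\Sigma_r^M\mathbf{X}_r^{u,T,\xi},\mathbf{X}_r^{u,T,\xi}}+2\scal{\eta_r^M,\mathbf{X}_r^{u,T,\xi}}$, is incompatible with equation (\ref{RBSDE_ M}): the second-order terms do \emph{not} cancel. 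Expanding $d\scal{\Sigma^M\mathbf{X},\mathbf{X}}$ along (\ref{BSDE1}), the state dynamics contribute $+\scal{\pr{A^*\Sigma^M+\Sigma^M A}\mathbf{X},\mathbf{X}}dr$ while $d\Sigma^M$ contributes $+\scal{\pr{A\Sigma^M+\Sigma^M A^*}\mathbf{X},\mathbf{X}}dr$; these \emph{add} instead of cancelling (take $B=0$, $C=0$, any $A\neq 0$: the jump terms then carry no $\mathbf{X}$-quadratic part, so the claimed identity fails outright), and no auxiliary ``pure $(\eta^M,\zeta^M)$-functional'' can remove $\mathbf{X}$-quadratic contributions. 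A verification argument based on a direct quadratic form in $\mathbf{X}$ would need a Riccati drift containing $-\pr{A^*\Sigma+\Sigma A}$ and the $\Sigma$-quadratic control term $+N\Sigma BB^*\Sigma$; equation (\ref{RBSDE_ M}) instead carries $+\pr{A\Sigma+\Sigma A^*}$ and the term $-NBB^*$, \emph{linear} in $\Sigma$. This transposition/linearity pattern is the signature of the Riccati equation satisfied by the \emph{inverse} of the direct-form matrix, and that is exactly how the paper exploits it.

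Concretely, the paper's proof proceeds differently at this point: it first re-reads (\ref{BSDE1}) as a \emph{forward} system $\mathbb{X}^{x,u,\mathbb{Z}}$ with free initial datum $x$ and two controls $(u,\mathbb{Z})$, replacing the hard terminal constraint by the penalization $M\mathbb{E}\norm{\mathbb{X}_T^{x,u,\mathbb{Z}}-\xi}^2$ (this is where the terminal datum $\Sigma_T^M=M^{-1}I_n$ acquires its meaning); it then introduces the companion forward process $Y^M$ of (\ref{YrM}), which satisfies $\Sigma^M Y^M=\mathbb{X}^{x,u,\mathbb{Z}}+\eta^M$, and applies It\^o to $\scal{\mathbb{X}^{x,u,\mathbb{Z}}+\eta^M,Y^M}$ --- in terms of the state this is the quadratic form of $\pr{\Sigma^M}^{-1}$, not of $\Sigma^M$ --- producing the sign-definite boundary term $\mathbb{E}\scal{\pr{\Sigma_0^M}^{-1}\pr{x+\eta_0^M},x+\eta_0^M}\geq 0$, two non-negative squares, and the target quantity, whence Part 2. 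Note also that this machinery genuinely requires $M<\infty$: $\Sigma^\infty$ need not be invertible (it is singular in Example \ref{Exp4}), so your plan of running the computation ``directly at $M=\infty$'' for Part 3.i cannot be executed. The paper instead obtains the $M=\infty$ lower bound by a limiting argument that your proposal omits: the monotonicity $\Sigma^M+\Theta^M\downarrow\Sigma^\infty+\Theta^\infty$ of Corollary \ref{CorStructuralPropSol} lets one replace $I+\Sigma^M+\Theta^M$ by $I+\Sigma^\infty+\Theta^\infty$ in the finite-$M$ bound, and $\mathbb{L}^2$-stability estimates for the linear BSDE (\ref{EqEta}) give $\zeta^M\rightarrow\zeta^\infty$; only after this does the optimal-control step (your 3.ii, the paper's construction $X:=\Sigma^\infty Y-\eta^\infty$ from (\ref{Yoptimal})) upgrade the inequality to the stated equality.
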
 
The proof will be postponed to Section \ref{SectionProofs}. Before giving the proof, one will need to understand some structural elements in the analysis of solutions of stochastic equations (especially in the backward setting). The equation (\ref{EqEta}) will turn out to be a standard linear backward stochastic differential equation and the solution (under a slightly more standard form) is guaranteed by \cite[Theorem 3.4]{Confortola_Fuhrman_2014}. The idea (and the principle of proof) for the lower bound on $V^N$ comes from the so-called "completion of squares" method. To obtain this, we introduce a companion process $Y^M$ and link the inner product $\scal{X^M+\eta^M,Y^M}$ to the Riccati equations (\ref{RiccatiDet}). Using some monotony and convergence properties for $\Sigma^M$ (obtained via structural arguments in Corollary \ref{CorStructuralPropSol}), we give an $M$-independent lower bound (the right-hand member of the equality in assertion 3.i.). Finally, exploiting the "optimal" companion process $Y$, the approach provides (as it is usual in LQ problems) the optimal control. We wish to emphasize that although the philosophy of the method is rather standard, we deal here with actual backward stochastic Riccati equations.\\
As we will see later on, the structural reduction of backward stochastic Riccati equations provides us with some ordinary differential systems of Riccati equations that are solvable. However, to come back to the (measurable) solutions of the initial problem, some measurability of selections has to be required. This can be avoided on discrete structures (and this justifies our Assumption \ref{AssEFinite} hereafter). Alternatively, stronger continuity for the coefficients has to be required in order to guarantee existence of measurable selections of matrix-valued solutions. We emphasize that this assumption is not needed to prove the previous result, nor the structural properties, but merely to guarantee proper measurability. 
\begin{assumption} \label{AssEFinite}
\begin{itemize}
\item[i.] The set $E$ is an at most countable family of modes endowed with the discrete topology.
\item[ii.] The coefficients $A,B,C$ are jump time-homogeneous (given a metric space $\mathcal{E}$, a predictable process $f:\Omega\times [0,T]\longrightarrow\mathcal{E}$ is said to be \textbf{jump time-homogeneous} if there exists a family of functions $f^j:E^{j+1}\times[0,T]\longrightarrow\mathcal{E}$ such that \[f_t(\omega)=f^j\pr{\gamma_0,\Gamma_{T_1}\pr{\omega},\ldots,\Gamma_{T_j}\pr{\omega},t},\] on $T_j(\omega)<t\leq T_{j+1}(\omega)$).
\end{itemize}
\end{assumption}
Then, Theorem \ref{ThMain} has the following particular formulation.
\begin{cor}
\label{corMain}
Whenever Assumption \ref{AssEFinite} holds true, then all the conclusions in the assertions [1-3]  in Theorem \ref{ThMain} are valid (without any further hypotheses).
\end{cor}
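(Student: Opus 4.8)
The plan is to observe that Theorem \ref{ThMain} is entirely conditional on a single hypothesis --- the existence of a solution to the Riccati equation (\ref{RBSDE_ M}) --- and that assertions [1--3] require this solvability for \emph{every} $M\in\mathbb{N}\cup\set{\infty}$ (assertion 1 for some $M$, assertion 2 for $M<\infty$, assertion 3.i for $M=\infty$ as well). Consequently, the whole content of the corollary reduces to proving that, under Assumption \ref{AssEFinite}, equation (\ref{RBSDE_ M}) admits a solution for each such $M$; once this is established, the conclusions follow verbatim from Theorem \ref{ThMain}.

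First I would invoke the structural reduction of Proposition \ref{PropReductionDetSystem}. Under Assumption \ref{AssEFinite}(ii), on each inter-jump interval $\pp{T_j,T_{j+1}}$ the coefficients $A,B,C$ coincide with deterministic functions of the realized jump history $\pr{\gamma_0,\Gamma_{T_1},\ldots,\Gamma_{T_j}}$ and of time; since at most $J$ jumps occur and $\hat q$ is deterministic once the current mode is fixed, the RBSDE (\ref{RBSDE_ M}) collapses, branch by branch, to the iterated deterministic Riccati system (\ref{RiccatiDet}). Concretely, working backward from the terminal datum $\Sigma_T^{M}=M^{-1}I_n$ (resp.\ $0_n$ for $M=\infty$): on the last interval one solves a standard matrix Riccati ODE; the jump relation then fixes $\Theta^{M}$ and the pre-jump value $\Sigma_{T_j}^{M}$ algebraically, and this serves as the terminal condition for the Riccati ODE on the preceding interval, and so on down the finite tree of jump histories. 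Solvability of each of these deterministic ODEs --- together with the positivity ($\Sigma^{M}$ and $\Sigma^{M}+\Theta^{M}(\theta)$ positive definite for $M<\infty$, semi-definite for $M=\infty$) that renders $\pr{I_n+\Sigma^{M}+\Theta^{M}(\theta)}^{-1}$ well defined --- is exactly what Theorem \ref{TheoremSolvabilityDetSyst} supplies, and the inherited RBSDE regularity is the object of Corollary \ref{CorStructuralPropSol}. Here I would combine the uniform bounds of Assumption \ref{AssCoeff} with the estimates of Theorem \ref{TheoremSolvabilityDetSyst} to obtain bounds that are uniform across the countably many branches, so that the reassembled pair lies in $\mathbb{L}^\infty\pr{\Omega;\mathbb{D}\pr{\pp{0,T};\mathcal{S}^n}}\times\mathbb{L}^\infty\pr{\Omega\times\pp{0,T}\times E,q;\mathcal{S}^n}$ and is continuous off the jump times, as required by the definition of solution for (\ref{RiccatiSigmaM}).

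The one genuinely delicate point --- and the sole reason Assumption \ref{AssEFinite}(i) is imposed --- is the predictability of the reassembled process $\pr{\Sigma^{M},\Theta^{M}(\cdot)}$, in particular the dependence $\theta\mapsto\Theta^{M}(\theta)$. For a general mode space this would demand a measurable selection among the matrix solutions, forcing stronger continuity of the coefficients; but when $E$ is at most countable with the discrete topology, the integral over $E$ becomes a sum, each branch is indexed by a point of $E^{j+1}$, and measurability in $\theta$ is automatic. Assembling the deterministic solutions along the (predictable) jump history therefore yields a bona fide predictable solution of (\ref{RBSDE_ M}), and hence of (\ref{RiccatiSigmaM}), for every $M\in\mathbb{N}\cup\set{\infty}$. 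Feeding this into Theorem \ref{ThMain} delivers all of [1--3] with no further hypotheses. I expect the measurable-reassembly step to be the crux of the argument; everything else is bookkeeping over the finitely many jumps together with the already-established deterministic theory.
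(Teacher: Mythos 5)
Your proposal is correct and follows essentially the same route as the paper: the paper proves the corollary by citing Corollary \ref{CorStructuralPropSol} assertion 4, whose own proof is exactly your argument --- descending recurrence over the jump index using Theorem \ref{TheoremSolvabilityDetSyst} to solve each deterministic Riccati equation, Assumption \ref{AssEFinite} serving only to guarantee measurability of the $\sigma_j$ (as in Remark \ref{remarkMeasurability}), and Proposition \ref{PropReductionDetSystem} to reassemble the predictable solution of (\ref{RBSDE_ M}), after which Theorem \ref{ThMain} applies verbatim. Your identification of the measurable-reassembly step as the sole role of the discreteness hypothesis matches the paper's stated rationale exactly.
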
The proof follows from Corollary \ref{CorStructuralPropSol} assertion 4.\\
In all the examples presented so far we have seen that either deterministic targets cannot be reached (in Example \ref{Exp1} or, for Examples \ref{Exp2}, \ref{Exp3}, every target can be reached. Computation of the value function $V$ allows  explicit identification of approximately reachable directions.
\begin{example}
\label{Exp4}
We consider a two-dimensional state space ($n=2$), a one-dimensional control space ($d=1$), a mode space $E:=\set{0,1}$, a transition intensity $\lambda=1$ and a post-jump measure $Q(\gamma)=\delta_{\set{1-\gamma}}$ for every $\gamma\in E$. Morover, we consider homogeneous coefficients given by \[A:=\begin{pmatrix}1 & 0 \\ 0 & 0 \end{pmatrix},  C=0_{2},  B:=\begin{pmatrix}0 \\ 1 \end{pmatrix}.\]  
\begin{itemize}
\item It is easy to see that the associated system (\ref{eq0}) is not approximately null-controllable starting from an(y)(non-zero) initial datum in $span\set{\begin{matrix}
1\\0
\end{matrix}}$.
\item Let us now turn to the Riccati system and the computation of $V^N$. We specify the dependence on $N$ of both the solution of the Riccati system and the couple $\pr{\eta^{N,\infty},\zeta^{N,\infty}}$ satisfying (\ref{EqEta}) with $M=\infty$. It is clear that the solution of our Riccati system is given by \[\Sigma_t^{N,\infty}=\pr{\begin{matrix}
0&0\\0&N(T-t)
\end{matrix}}, \Theta_\cdot^\infty=0_2.\]
For simplicity, we set  $\xi=\pr{\begin{matrix}
\xi^1\\\xi^2
\end{matrix}}$. By letting $\hat{\zeta}:=\pr{\begin{matrix}
\hat{\zeta}^1\\ \hat{\zeta}^2
\end{matrix}}$ be such that $\xi=\mathbb{E}\pp{\xi}+\int_0^T\hat{\zeta}_t(\theta)\tilde{q}(dtd\theta)$, it follows that 
$\zeta^{N,\infty}_t(\theta)=\pr{\begin{matrix}
e^{-t}\hat{\zeta}^1_t(\theta)\\ \frac{1}{1+N(T-t)}\hat{\zeta}^2_t(\theta)
\end{matrix}}$.
Hence, the value function is explicitly given as \[V^N\left(T,\begin{pmatrix}\xi^1 \\ \xi^2 \end{pmatrix}\right)=\mathbb{E}\pp{\int_{\pp{0,T}\times E}e^{-2t}\abs{\hat{\zeta}_t^1(\theta)}^2\hat{q}(dtd\theta)}+\mathbb{E}\pp{\int_{\pp{0,T}\times E}\frac{1}{1+N(T-t)}\abs{\hat{\zeta}^2_t(\theta)}^2\hat{q}(dtd\theta)}.\] Using dominated convergence and passing $N\rightarrow\infty$, one gets \[e^{-2T}\mathbb{E}\pp{\abs{\xi^1-\mathbb{E}\pp{\xi^1}}^2}\leq \inf_{N\geq 1}V^N\left(T,\begin{pmatrix}\xi^1 \\ \xi^2 \end{pmatrix}\right)\leq \mathbb{E}\pp{\abs{\xi^1-\mathbb{E}\pp{\xi^1}}^2}.\] As consequence, $\xi=\begin{pmatrix}\xi^1 \\ \xi^2 \end{pmatrix}$ satisfies $V\pr{T,\xi}=0$ (or, equivalently, is approximately reachable) if and only if $\xi^1$ is deterministic.
\end{itemize} 
\end{example}

\section{Structural Representation for Backward Stochastic Riccati Equations. Elements of Existence and Uniqueness}
\label{SectionStruct0}
\subsection{Structure Elements}\label{SectionStructure}
We will closely follow the ordinary differential approach in the study
of backward stochastic systems driven by marked point processes in
\cite{CFJ_2014}. The reader will recall that we have assumed that
the mode process is observed up to the $J$-th jump. We will introduce
a cemetery state $\pr{\infty,\Delta}$ to which the process is sent
after $\min\set{T_{J},T}$. We consider the space of elementary marks
$E_{T}:=\pp{0,T}\times E\cup\set{\pr{\infty,\Delta}}$. Next, we define
the space of all marks of length $(k+1)$ (basically corresponding
to the initial configuration $(0,\gamma_{0})$ to which we add the
first $J\geq k\geq0$ jump times and the corresponding post-jump positions):
\begin{equation}
E_{T}^{k+1}\supset E_{T,k}:=\left\lbrace \begin{split} & e=\pr{t_{0},\gamma_{0},t_{1},\gamma_{1},\ldots,t_{k},\gamma_{k}}\in E_{T}^{k+1}\textnormal{ such that }\\
 & t_{0}=0,\ \pr{t_{j}}_{0\leq j\leq k}\textnormal{ is a non-decreasing family; }t_{j}<t_{i+1},\textnormal{ if }t_{j}\leq T;\\
 & \pr{t_{j},\gamma_{j}}=\pr{\infty,\Delta},\textnormal{ if either }t_{j}>T,\textnormal{ and }0\leq j\leq\min\set{{k,J}-1}\textnormal{or }j=J.
\end{split}
\right\rbrace \label{ETk}
\end{equation}
This family will be endowed with its family of Borel sets denoted
$\mathcal{B}_{k}$. We introduce the notations 
\[
\abs e=k\textnormal{ such that }t_{\abs e}=t_{k},\textnormal{and }\gamma_{\abs e}:=\gamma_{k},\textnormal{ for such elements }e\in E_{T}^{k+1}.
\]
For $T\geq t>t_{\abs e}$ and $\theta\in E$, we set
the concatenation rule 
\[
e\oplus\pr{t,\theta}:=\pr{t_{0},\gamma_{0},\ldots,\abs{e},\gamma_{\abs{e}},t,\theta}\in E_{T}^{\abs{e}+1}.
\]
The reader will note that the $E_{T}^{k+1}$-valued random variable
given by $e_{k}:=\pr{0,\gamma_{0},T_{1},\Gamma_{T_{1}}^{\gamma_{0}},\ldots,T_{k},\Gamma_{T_{k}}^{\gamma_{0}}}$
corresponds to a mode trajectory. Let us now express \textit{measurable}
notions in this context.
\begin{itemize}
\item For the \textit{final datum} $\xi$ assumed to be $\mathcal{F}_{\left[0,T\right]}$
-measurable, this amounts to asking the existence of a family $\pr{\xi_{j}}_{0\leq j}$
such that $\xi_{j}:E_{T}^{j+1}\longrightarrow\mathbb{R}^n$ is
$\mathcal{B}_{j}\mid\mathcal{B}\pr{\mathbb{R}^{n}}$ -measurable such that
\[
\textnormal{If }\abs{e}=\infty,\textnormal{ then }\xi_{j}(e)=0.\textnormal{ On the set }T_{j}\leq T<T_{j+1},\textnormal{ one has } \xi(\omega)=\xi_{j}\pr{e_{j}\pr{\omega}}.
\]
\item The first component of the solution of our backward systems (either in the Euclidean space or on a matrix space) will consist of \textit{càdlàg
processes $\mathbf{{X}}$ continuous except at jumping times.} Such
processes (taking their values in some topological space $\mathcal{E}$ endowed with its Borel field $\mathcal{B}(\mathcal{E})$) are described by a family of $\mathcal{B}_{j}\otimes\mathcal{{B}}\pr{\pp{0,T}}/\mathcal{B}(\mathcal{E})$
-measurable functions $\mathbf{x}_{j}$ such that $\mathbf{x}_j\pr{e,\cdot}$ is continuous on $\pp{0,T}$ and constant on $\pp{0,\min\set{T,t_{\abs{e}}}}$ for all $e\in E_{T}^{j+1}$
\begin{equation*}
\textnormal{If }\abs{e}=\infty,\textnormal{then }\mathbf{x}_{j}(e,\cdot)=0.\textnormal{ On the set }T_{j}\leq t<T_{j+1},\textnormal{ one has } \mathbf{X}_t\pr{\omega}=\mathbf{x}_{j}\pr{e_{j}\pr{\omega},t},\textnormal{ for }t\leq T
\end{equation*}
\item The second component will consist of \textit{predictable processes }$\mathbf{Z}$. They are described by a family of $\mathcal{B}_{j}\otimes\mathcal{{B}}\pr{\pp{0,T}}\otimes\mathcal{B}(E)/\mathcal{B}(\mathcal{E})$-measurable functions $\mathbf{z}_{j}$ such that 
\begin{equation*}\begin{split}
&\textnormal{If }\abs{e}=\infty,\textnormal{then }\mathbf{z}_{j}(e,\cdot,\cdot)=0.\\
&\textnormal{On the set }T_{j}< t\leq T_{j+1},\textnormal{ one has } \mathbf{Z}_t\pr{\omega,\theta}=\mathbf{z}_{j}\pr{e_{j}\pr{\omega},t,\theta},\textnormal{ for }t\leq T,\theta\in E.
\end{split}
\end{equation*}
\item In this setting, \textit{the compensator} becomes \[\hat{q}\pr{\omega, dt, d\theta}=\sum_{j\geq0}\hat{q}_j\pr{e_j(\omega),dt,d\theta}\mathbbm{1}_{T_j(\omega)<t\leq \min\set{T_{j+1}(\omega),T}}\] such that 
\begin{equation*}
\begin{split}
&\textnormal{If }j\geq J,\textnormal{ then }\hat{q}_j\pr{e,dt,d\theta}=\delta_{\Delta}(d\theta)\delta_{\infty}(dt). \textnormal{ Otherwise,}\\ &\hat{q}_j\pr{e,dt,d\theta}:=\lambda\pr{\gamma_{\abs{e}},t}Q\pr{\gamma_{\abs{e}},t,d\theta}\mathbbm{1}_{t_{\abs{e}}<\infty,t_{\abs{e}}\leq t\leq T}dt+\delta_{\Delta}(d\theta)\delta_{\infty}(dt)\mathbbm{1}_{\set{t_{\abs{e}}<\infty,t>T}\cup \set{t_{\abs{e}}=\infty}}.
\end{split}
\end{equation*}
\item The reader will also recall that the coefficients are set to $0$ as soon as the cemetery state $\Delta$ is reached. Being \textit{predictable}, one identifies $A$ with a family of $\mathcal{B}_{j}\otimes\mathcal{{B}}\pr{\pp{0,T}}/\mathcal{B}(\mathbb{R}^{n\times n})$-measurable functions $A_j$ such that 
\begin{equation*}\begin{split}
&\textnormal{If }\abs{e}=\infty,\textnormal{then }A_{j}(e,\cdot)=0.\\
&\textnormal{On the set }T_{j}< t\leq	T_{j+1},\textnormal{ one has } A_t\pr{\omega}=A_{j}\pr{e_{j}\pr{\omega},t},\textnormal{ for }t\leq T,\theta\in E.
\end{split}
\end{equation*}
\item Similar assertions hold true for $B$ and $C$. 
\end{itemize}

\subsection{Structure Reduction to Deterministic Riccati Systems}
Let us fix, for the time being $M>0$ (we equally allow $M=\infty$). Inspired by \cite{CFJ_2014} (and using the structure elements described in Section \ref{SectionStructure}), we introduce the following system of iterated deterministic (backward) Riccati equations.
\begin{equation}
\label{RiccatiDet}
\left\lbrace
\begin{split}
\mathbf{\sigma}_J&\pr{e,\cdot}=M^{-1}I_n\\
\mathbf{\sigma}_j&\pr{e,T}=M^{-1}I_n,\textnormal{ for all }j\leq J-1,\ e\in E_T^{j+1}\\
d\mathbf{\sigma}_j&\pr{e,t}=\\
&\int_E \left[\left(
\begin{split}
&\pp{\mathbf{\sigma}_j\pr{e,t}\pr{C_j^*\pr{e,t,\theta}+I_n}-\mathbf{\sigma}_{j+1}\pr{e\oplus \pr{t,\theta},t}}\\
\times&\pp{I_n+\mathbf{\sigma}_{j+1}\pr{e\oplus \pr{t,\theta},t}}^{-1}\\
\times&\pp{\pr{C_j\pr{e,t,\theta}+I_n}\mathbf{\sigma}_j\pr{e,t}-\mathbf{\sigma}_{j+1}\pr{e\oplus \pr{t,\theta},t}}
\end{split}
\right)+\mathbf{\sigma}_j\pr{e,t}-\mathbf{\sigma}_{j+1}\pr{e\oplus \pr{t,\theta},t}\right]\hat{q}_j\pr{e,dt,d\theta}\\
&+\pp{A_j\pr{e,t}\mathbf{\sigma}_j\pr{e,t}+\mathbf{\sigma}_j\pr{e,t}A_j^*\pr{e,t}-NB_j\pr{e,t}B_j^*\pr{e,t}}dt\\
&\textnormal{ for all }j\leq J-1,\ e\in E_T^{j+1}\textnormal{ and all }t_{\abs{e}}\leq t\leq T,\\
\end{split}
\right.
\end{equation}
such that \begin{itemize}
\item for all $j\leq J$, $\mathbf{\sigma}_j\pr{e,\cdot}$ is positive definite if $M<\infty$ (resp. positive semi-definite for $M=\infty$);
\item for all $j\leq J$, $\mathbf{\sigma}_j\pr{e,\cdot}$ is continuous on $\pp{t_{\abs{e}},T}$;
\item for all $j\leq J$, $\mathbf{\sigma}_j\in \mathbb{L}^\infty\pr{E_T^{j+1}\times\pp{t_{\abs{e}},T};\mathcal{S}_+^n}$ (i.e. $\mathbf{\sigma}_j$ is $\mathcal{B}_{j+1}\otimes\mathcal{B}(\pp{0,T})\mid\mathcal{B}\pr{\mathcal{S}_+^n}$-measurable and bounded uniformly in $e\in E_T^{j+1}$).
\end{itemize}
\begin{remark}
\label{remarkMeasurability}The reader is kindly invited to note the following.
\begin{itemize}
\item[i.] The solutions can be considered global on $\pp{0,T}$ (instead of $\pp{t_{\abs{e}},T}$ by setting the coefficients to be $0$ prior to $t_{\abs{e}}$ and the solution to be constant on $\pp{0,t_{\abs{e}}}$.
\item [ii.] Since we have made the choice of setting to $0$ the coefficients after the $J$-th jump, $\mathbf{\sigma}_J\pr{e,\cdot}=M^{-1}I_n$ is equivalent to $\mathbf{\sigma}_{J+1}\pr{e,\cdot}=0$ and $\mathbf{\sigma}_J$ solving the same kind of system (with $\hat{q}_{J}$ a Dirac mass at $\infty$). This contributes to coherence with the structural definition of càdlàg solutions.
\item[iii.] When $M$ is allowed to vary, one should write $\sigma_j^M$ in the previous system (but the coefficients $A,B,C$ keep their independence).
\item[iv.] As we will see shortly after, generic equations appearing in (\ref{RiccatiDet}) are (always) solvable provided that a (correctly) measurable $\sigma_{j+1}$ is plugged in the equation at step $j$. Under the Assumption \ref{AssEFinite}, measurability of $\mathbf{\sigma}_j$ is always satisfied (as it only depends on the modes living in a discrete set and not on the jump times themselves).
\item[v.] Adding infinite activity (number of jumps) can be dealt with in a similar way. The interested reader can take a look a \cite{CFJ_2014} (where both cases are presented and the non-explosion-like condition introduced). We prefer to concentrate on this framework because it appears in gene networks (observed for a specified duration) and it is easier to implement the structure equation (as a finite system of ODE).
\end{itemize} 
\end{remark}
Using the structure elements in Section \ref{SectionStructure}, one gets (with the same proof as \cite[Lemma 7]{CFJ_2014}), the following.
\begin{proposition}
\label{PropReductionDetSystem}
For $M>0$ (respectively $M=\infty$), the system (\ref{RBSDE_ M}) is (uniquely) solvable and takes positive definite values (resp. positive semi-definite values) if and only if the family of (iterated deterministic Riccati backward) equations (\ref{RiccatiDet}) is (uniquely) solvable.
In this case, the solution is given by \begin{equation*}
\begin{split}
\Sigma_t^{M}(\omega)&=\mathbf{\sigma}_j\pr{e_j(\omega),t}, \textnormal{ if }T_j(\omega)\leq t< T_{j+1}(\omega),\\
\Theta_t^{M}(\omega,\theta)&=\mathbf{\sigma}_{j+1}\pr{e_j(\omega)\oplus\pr{t,\theta},t}-\mathbf{\sigma}_j\pr{e_j(\omega),t}, \textnormal{ if }T_j(\omega)< t\leq T_{j+1}(\omega).
\end{split}
\end{equation*}
\end{proposition}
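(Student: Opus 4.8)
The plan is to exploit the piecewise-deterministic structure recalled in Section \ref{SectionStructure}, following verbatim the argument behind \cite[Lemma 7]{CFJ_2014}. By definition of the solution class, $\Sigma^{M}$ is càdlàg and continuous off the jump times while $\Theta^{M}$ is predictable, so that both are encoded by a family $\pr{\sigma_j}_{0\leq j\leq J}$ through the two displayed formulas. The key observation is that between consecutive jumps the random measure $q$ carries no atom, whence on $\set{T_j<t<T_{j+1}}$ one has $\int_{T_j}^t\int_E\Theta_r^{M}\pr{\theta}q\pr{dr,d\theta}=0$ and therefore $\int_{T_j}^t\int_E\Theta_r^{M}\pr{\theta}\tilde q\pr{dr,d\theta}=-\int_{T_j}^t\int_E\Theta_r^{M}\pr{\theta}\hat q_j\pr{e_j,dr,d\theta}$. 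Conditionally on the history $e_j$, equation (\ref{RiccatiSigmaM}) thus collapses to a deterministic integral equation, and I claim it is term-by-term identical to (\ref{RiccatiDet}) under the dictionary $\Sigma_t^{M}=\sigma_j\pr{e_j,t}$ together with $\Sigma_t^{M}+\Theta_t^{M}\pr{\theta}=\sigma_{j+1}\pr{e_j\oplus\pr{t,\theta},t}$.

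For the sufficiency direction ($\Leftarrow$) I would start from a solution $\pr{\sigma_j}_j$ of (\ref{RiccatiDet}), define $\pr{\Sigma^{M},\Theta^{M}}$ by the two formulas, and check predictability of $\Theta^{M}$ and càdlàg regularity of $\Sigma^{M}$ (automatic under Assumption \ref{AssEFinite}, since the $\sigma_j$ then depend only on the discrete modes). The verification of (\ref{RiccatiSigmaM}) is carried out interval by interval: substituting the dictionary gives $I_n+\Sigma_r^{M}+\Theta_r^{M}\pr{\theta}=I_n+\sigma_{j+1}\pr{e_j\oplus\pr{t,\theta},t}$, then $\Sigma_r^{M}C_r^*\pr{\theta}-\Theta_r^{M}\pr{\theta}=\sigma_j\pr{C_j^*\pr{e,t,\theta}+I_n}-\sigma_{j+1}$ and the symmetric identity for $C_r\pr{\theta}\Sigma_r^{M}-\Theta_r^{M}\pr{\theta}$, while the extra $-\Theta_r^{M}\pr{\theta}$ produced by rewriting $\tilde q=q-\hat q$ between jumps supplies exactly the summand $\sigma_j-\sigma_{j+1}$ present in (\ref{RiccatiDet}). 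Hence the deterministic dynamics of $\sigma_j$ on $\pp{t_{\abs{e}},T}$ coincide with those of $\Sigma^{M}$ away from jumps, the single atom of $q$ at $T_{j+1}$ makes $\Sigma^{M}$ jump by $\Theta_{T_{j+1}}^{M}\pr{\Gamma_{T_{j+1}}}=\sigma_{j+1}\pr{e_{j+1},T_{j+1}}-\sigma_j\pr{e_j,T_{j+1}}$, which is precisely the prescribed post-jump reinitialisation, and the terminal data $\sigma_j\pr{e,T}=M^{-1}I_n$ match $\Sigma_T^{M}=M^{-1}I_n$. Positive (semi)definiteness of each $\sigma_j$ transfers verbatim to $\Sigma_t^{M}$ and to $\Sigma_t^{M}+\Theta_t^{M}\pr{\theta}=\sigma_{j+1}$.

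The necessity direction ($\Rightarrow$) runs the same computation backwards. Starting from a solution of (\ref{RBSDE_ M}) in the prescribed class, the structural representation of Section \ref{SectionStructure} already yields $\Sigma_t^{M}=\sigma_j\pr{e_j,t}$ on $\set{T_j\leq t<T_{j+1}}$ for a measurable family $\pr{\sigma_j}_j$; the representation property of $\tilde q$ and uniqueness of the compensator then identify the value of $\Sigma^{M}$ immediately after a potential jump at $\pr{t,\theta}$ as $\sigma_j\pr{e_j,t}+\Theta_t^{M}\pr{\theta}$, which we baptise $\sigma_{j+1}\pr{e_j\oplus\pr{t,\theta},t}$. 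Localising (\ref{RiccatiSigmaM}) to $\set{T_j<t<T_{j+1}}$ and differentiating returns the $j$-th equation of (\ref{RiccatiDet}), while the imposed definiteness of $\pr{\Sigma^{M},\Theta^{M}}$ forces that of the $\sigma_j$. Since the dictionary $\pr{\sigma_j}_j\leftrightarrow\pr{\Sigma^{M},\Theta^{M}}$ is a bijection, uniqueness for one system is equivalent to uniqueness for the other, which closes the equivalence.

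The step I expect to be the most delicate is the identification, in the $\Rightarrow$ direction, of $\Theta^{M}$ as the increment $\sigma_{j+1}\pr{e_j\oplus\pr{t,\theta},t}-\sigma_j\pr{e_j,t}$: this is where the martingale-representation property of the marked point measure and the explicit form of $\hat q_j$ from Section \ref{SectionStructure} are genuinely invoked, and where one must take care that the correct post-jump configuration $e_j\oplus\pr{t,\theta}$ enters. The accompanying measurability of the selections $\sigma_j$ (so that gluing them produces a bona fide predictable/càdlàg pair) is the secondary difficulty; it is free under Assumption \ref{AssEFinite} because the modes live in a discrete set, and otherwise rests on the stronger continuity hypotheses mentioned after Theorem \ref{ThMain}. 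By contrast, the algebraic matching of the two systems is a direct substitution and presents no real difficulty once the dictionary is fixed.
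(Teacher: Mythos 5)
Your proposal is correct and takes essentially the same route as the paper, which proves this proposition simply by invoking the structure elements of Section \ref{SectionStructure} and appealing to the argument of \cite[Lemma 7]{CFJ_2014} --- precisely the dictionary $\Sigma_t^{M}=\sigma_j\pr{e_j,t}$, $\Sigma_t^{M}+\Theta_t^{M}(\theta)=\sigma_{j+1}\pr{e_j\oplus\pr{t,\theta},t}$ and the interval-by-interval verification you spell out. Your algebraic identifications (in particular $\Sigma_r^{M}C_r^*(\theta)-\Theta_r^{M}(\theta)=\sigma_j\pp{C_j^*+I_n}-\sigma_{j+1}$ and the summand $\sigma_j-\sigma_{j+1}$ arising from $\tilde{q}=q-\hat{q}$ between jumps) are exactly the computations implicit in the paper's citation, so the two proofs coincide in substance.
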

\subsection{Solvability of Generic Equation Appearing in (\ref{RiccatiDet})}
For $j\leq J-1$ and $e\in E_T^{j+1}$ fixed, each equation of the system (\ref{RiccatiDet}) is of the form 
\begin{equation}
\label{GenericRiccatiM}
\left\lbrace
\begin{split}
dp_t(\sigma)&=p_t(\sigma)c_t(\sigma)p_t(\sigma)+a_t(\sigma)p_t(\sigma)+p_t(\sigma)a_t^*(\sigma)-b_t(\sigma),\textnormal{ for }t_{\abs{e}}< t\leq T,\\
p_T(\sigma)&=M^{-1}I_n.
\end{split}
\right.
\end{equation}The matrix coefficients are explicitly computed by setting, for $\sigma:=\sigma_{j+1}\in \mathbb{L}^\infty\pr{E_T^{j+2}\times\pp{0,T};\mathcal{S}_+^n}$,
\begin{equation}
\label{GenericRiccatiMCoeff}
\begin{split}
a_t\pr{\sigma}&:=A_j\pr{e,t}+\frac{1}{2}I_n-\int_E \mathbf{\sigma}\pr{e\oplus \pr{t,\theta},t}\pp{I_n+\mathbf{\sigma}\pr{e\oplus \pr{t,\theta},t}}^{-1}\pp{C_j\pr{e,t,\theta}+I_n}\lambda\pr{\gamma_{\abs{e}},t}Q\pr{\gamma_{\abs{e}},t,d\theta};\\
b_t\pr{\sigma}&:=NB_j\pr{e,t}B_j\pr{e,t}^*+\int_E \mathbf{\sigma}\pr{e\oplus \pr{t,\theta},t}\pp{I_n+\mathbf{\sigma}\pr{e\oplus \pr{t,\theta},t}}^{-1}\lambda\pr{\gamma_{\abs{e}},t}Q\pr{\gamma_{\abs{e}},t,d\theta};\\
c_t\pr{\sigma}&:=\int_E\pp{C_j^*\pr{e,t,\theta}+I_n}\pp{I_n+\mathbf{\sigma}\pr{e\oplus \pr{t,\theta},t}}^{-1}\pp{C_j\pr{e,t,\theta}+I_n}\lambda\pr{\gamma_{\abs{e}},t}Q\pr{\gamma_{\abs{e}},t,d\theta}.
\end{split}
\end{equation}
(The reader will note that the coefficients can be considered on $\pp{0,T}$ (e.g. $A_j(e,t)=0_n$ if $t<t_{\abs{e}}$ etc.) such that $p_t=p_{\max\set{t,t_{\abs{e}}}}$ in order to simplify notations).\\
We have the following result.
\begin{thm}
\label{TheoremSolvabilityDetSyst}
Let $\sigma \in \mathbb{L}^\infty\pr{E_T^{j+2}\times\pp{0,T};\mathcal{S}_+^n}$ be fixed. 
\begin{itemize}
\item[1.] The coefficients $a(\sigma)\in \mathbb{L}^\infty\pr{\pp{0,T};\mathbb{R}^{n\times n}}$, $b(\sigma)\in \mathbb{L}^\infty\pr{\pp{0,T};\mathcal{S}_+^{n}}$ and $c(\sigma)\in \mathbb{L}^\infty\pr{\pp{0,T};\mathcal{S}_+^{n}}$ and
\begin{equation}
\label{AssCoeffabc}
\norm{a_t}(\sigma)\leq \norm{A}_\infty+\frac{1}{2}+\norm{\lambda}_\infty \pr{\norm{C}_\infty+1},\ \norm{b_t}(\sigma)\leq N\norm{B}_\infty^2+\norm{\lambda}_\infty,\ \norm{c_t}(\sigma)\leq \pr{\norm{C}_\infty+1}^2.
\end{equation}
\item[2.] 
\begin{itemize}
\item[i.] The equation (\ref{GenericRiccatiM}) admits a positive semi-definite unique solution $p(\sigma)\in \mathbb{L}^\infty\pr{\pp{0,T};\mathcal{S}_+^n}$. 
\item[ii.] The  $\mathbb{L}^\infty$ norm of $p(\sigma)$ can be upper-bounded independently of $\sigma$ and $M\geq 1$.
\item[iii.] For $M<\infty$ fixed, there exists a positive constant $c_M>0$ (independent of $\sigma$) such that $p_t(\sigma)\geq c_MI_n$, for all $t\in[0,T]$.
\end{itemize}
\item[3.] Let us consider $\eta \in \mathbb{L}^\infty\pr{E_T^{j+2}\times\pp{0,T};\mathcal{S}_+^n}$ and $M'\in\mathbb{N}\cup\set{\infty}$. Moreover, we let $p(\eta)$ be the solution of the equation (\ref{GenericRiccatiM}) associated to $\eta$ such that $p_T(\eta)=\pr{M'}^{-1}I_n$ and $p(\sigma)$ be the solution of the equation (\ref{GenericRiccatiM}) associated to $\sigma$ such that $p_T(\sigma)=M^{-1}I_n$. If $\eta\leq\sigma$ and $M\leq M'$, then \[p_t(\eta)\leq p_t(\sigma),\textnormal{ for all } t_{\abs{e}}\leq t\leq T,\](inequalities are understood between matrices).
\end{itemize}
\end{thm}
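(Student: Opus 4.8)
The plan is to handle the three assertions in order, treating 1 and 2 by standard Riccati technology and reserving the real work for the comparison in 3. \textbf{Assertion 1} is a direct computation from (\ref{GenericRiccatiMCoeff}). The only structural input I would isolate is that for every $\sigma\in\mathcal{S}_+^n$ the matrix $\sigma\pp{I_n+\sigma}^{-1}=I_n-\pp{I_n+\sigma}^{-1}$ is symmetric with spectrum in $\pp{0,1}$ (operator monotonicity of $x\mapsto x\pr{1+x}^{-1}$), so that $\norm{\sigma\pp{I_n+\sigma}^{-1}}\leq 1$ and $\norm{\pp{I_n+\sigma}^{-1}}\leq 1$. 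Integrating the resulting bounded integrands against the finite measure $\lambda\pr{\gamma_{\abs e},t}Q\pr{\gamma_{\abs e},t,d\theta}$ yields (\ref{AssCoeffabc}). Positivity is immediate: $NB_jB_j^*\geq 0$ and $\sigma\pp{I_n+\sigma}^{-1}\geq 0$ give $b(\sigma)\in\mathcal{S}_+^n$, while the integrand of $c(\sigma)$ is congruent to the positive definite $\pp{I_n+\sigma}^{-1}$, hence $c(\sigma)\in\mathcal{S}_+^n$.

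For \textbf{Assertion 2} I would first reverse time by $s=T-t$, writing $\tilde p(s)=p(T-s)$, so that (\ref{GenericRiccatiM}) becomes the forward symmetric Riccati equation $\frac{d\tilde p}{ds}=-\tilde p\,c\,\tilde p-a\tilde p-\tilde p a^*+b$ with $\tilde p(0)=M^{-1}I_n$; its right-hand side is locally Lipschitz in $\tilde p$ with the $\sigma$-uniform coefficient bounds of Assertion 1, so Cauchy--Lipschitz gives a unique maximal solution. Invariance of $\mathcal{S}_+^n$ follows from a standard tangent-cone (Nagumo-type) argument: at a boundary point with kernel vector $v$ the quadratic and linear terms vanish on $v$ and one is left with $\scal{bv,v}\geq 0$, so the field is subtangential. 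While $\tilde p\geq 0$ the dissipative term $-\tilde p\,c\,\tilde p\leq 0$ may be dropped, giving by comparison $\tilde p\leq\bar p$, where $\bar p$ solves the linear equation $\frac{d\bar p}{ds}=-a\bar p-\bar p a^*+b$, $\bar p(0)=M^{-1}I_n$; a Gronwall estimate using Assertion 1 and $M^{-1}\leq 1$ bounds $\norm{\bar p}$ on $\pp{0,T}$ uniformly in $\sigma$ and in $M\geq 1$. This is 2.ii and, simultaneously, rules out blow-up, so the solution is global on $\pp{t_{\abs e},T}$, proving 2.i. For 2.iii (with $M<\infty$, so $\tilde p(0)=M^{-1}I_n$ is invertible) I pass to $r=\tilde p^{-1}$, which while $\tilde p$ is invertible solves $\frac{dr}{ds}=c+ra+a^*r-rbr$, $r(0)=MI_n$: a Riccati equation of the same type with source $c\geq 0$ and dissipative term $-rbr\leq 0$. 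Dropping $-rbr$ and applying Gronwall exactly as above yields $r(s)\leq C_M I_n$ with $C_M$ depending on $M,N,T$ and the coefficient bounds but not on $\sigma$; hence $\tilde p(s)\geq C_M^{-1}I_n$, which keeps $\tilde p$ invertible (closing the bootstrap) and gives 2.iii with $c_M:=C_M^{-1}$.

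For \textbf{Assertion 3}, set $\delta:=p(\sigma)-p(\eta)$ and write $R_\sigma:=\pp{I_n+\sigma}^{-1}$, $R_\eta:=\pp{I_n+\eta}^{-1}$ and $G:=R_\eta-R_\sigma$ (arguments $e\oplus\pr{t,\theta},t$ suppressed); since $\eta\leq\sigma$ one has $R_\eta\geq R_\sigma$, so $G\geq 0$, and since $M\leq M'$ the terminal data are ordered, $\delta_T=\pr{M^{-1}-\pr{M'}^{-1}}I_n\geq 0$. Subtracting the two copies of (\ref{GenericRiccatiM}) and linearising around $p(\eta)$ with the coefficients $a(\sigma),c(\sigma)$, I obtain $\dot\delta=\Lambda\delta+\delta\Lambda^*+\delta\,c(\sigma)\,\delta+S$, where $\Lambda:=a(\sigma)+p(\eta)c(\sigma)$ and $S:=p(\eta)\pr{c(\sigma)-c(\eta)}p(\eta)+\pr{a(\sigma)-a(\eta)}p(\eta)+p(\eta)\pr{a(\sigma)-a(\eta)}^*-\pr{b(\sigma)-b(\eta)}$. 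Using (\ref{GenericRiccatiMCoeff}) and the identity $\sigma R_\sigma=I_n-R_\sigma$ (likewise for $\eta$), each coefficient difference is governed by the single kernel $G$, and the three contributions collapse to a perfect square: with $K_\theta:=C_j\pr{e,t,\theta}+I_n$,
\begin{equation*}
S=-\int_E\pr{K_\theta\,p(\eta)+I_n}^*G(\theta)\pr{K_\theta\,p(\eta)+I_n}\lambda\pr{\gamma_{\abs e},t}Q\pr{\gamma_{\abs e},t,d\theta}\leq 0.
\end{equation*}
Thus $\delta_T\geq 0$, $S\leq 0$, and the quadratic term $\delta\,c(\sigma)\,\delta$ vanishes on $\ker\delta$; reversing time and running the same tangent-cone argument as in Assertion 2 shows $\mathcal{S}_+^n$ is invariant, i.e. $\delta_t\geq 0$, which is precisely $p_t(\eta)\leq p_t(\sigma)$.

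\emph{Main obstacle.} The only non-routine point is the source computation in Assertion 3. A naive term-by-term comparison fails because, unlike $b(\sigma)-b(\eta)\geq 0$ and $c(\sigma)-c(\eta)\leq 0$, the drift difference $a(\sigma)-a(\eta)$ is sign-indefinite as a matrix; it is only after absorbing $a(\sigma)$ and the cross term $p(\eta)c(\sigma)$ into $\Lambda$ and recognising the completion of squares displayed above that $S$ acquires a definite sign. I would therefore carry out this algebraic reduction explicitly before invoking any invariance or comparison principle.
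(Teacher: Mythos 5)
Your proof is correct, and its skeleton coincides with the paper's: uniform coefficient bounds from the contraction property of $\sigma\mapsto\sigma\pr{I_n+\sigma}^{-1}$, domination by the Lyapunov solution for 2.ii, the inverse Riccati equation $dp_t^{-1}=p_t^{-1}b_tp_t^{-1}-a_t^*p_t^{-1}-p_t^{-1}a_t-c_t$ for 2.iii, and, for assertion 3, the identical linearization $\dot\delta=\Lambda\delta+\delta\Lambda^*+\delta c(\sigma)\delta+S$ with $\Lambda=a(\sigma)+p(\eta)c(\sigma)$ (this is exactly the paper's $\hat a$, with your $S$ equal to $-\hat c$). Two points differ and are worth recording. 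First, for existence in 2.i the paper does not argue by Cauchy--Lipschitz plus cone invariance: it perturbs the quadratic coefficient ($c^k:=c+k^{-1}I_n$), invokes classical solvability of non-degenerate deterministic Riccati equations, establishes monotonicity of $p^k$ in $k$, and passes to the limit by dominated convergence in the integral form. Your route is more self-contained (uniqueness falls out of local Lipschitzness rather than through the approximation), while the paper's leans on cited LQ theory; both rule out blow-up by the same Lyapunov comparison. Second, your completion of squares in assertion 3 is in fact the algebraically exact one. Computing from (\ref{GenericRiccatiMCoeff}) and using $\sigma\pr{I_n+\sigma}^{-1}=I_n-\pr{I_n+\sigma}^{-1}$, one finds $a(\eta)-a(\sigma)=\int_E G\,K_\theta\,\lambda\pr{\gamma_{\abs e},t}Q\pr{\gamma_{\abs e},t,d\theta}$ with $G=\pp{I_n+\eta}^{-1}-\pp{I_n+\sigma}^{-1}\geq 0$, so the cross terms enter $-S=\hat c$ with a plus sign and the source collapses, with no remainder, to
\begin{equation*}
\hat c_t=\int_E\pr{K_\theta\,p(\eta)+I_n}^*G(\theta)\pr{K_\theta\,p(\eta)+I_n}\lambda\pr{\gamma_{\abs e},t}Q\pr{\gamma_{\abs e},t,d\theta},
\end{equation*}
which is your formula. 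The paper's displayed expansion of $\hat c$ swaps the roles of $\pp{I_n+\sigma}^{-1}$ and $\pp{I_n+\eta}^{-1}$ in the four cross-term lines and consequently ends with the square built on $K_\theta\,p(\eta)-I_n$; the conclusion $\hat c\geq 0$ is unaffected, but the intermediate algebra there contains a sign slip that your computation corrects. Finally, your closing step (positivity of $\delta$ via time reversal and the tangent-cone argument) is a proof of precisely the standard fact the paper invokes without detail, namely that a Riccati equation with positive semi-definite quadratic coefficient, positive semi-definite source and ordered terminal datum has a positive semi-definite solution.
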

(To ensure better readability), the proof is postponed to Section \ref{SectionProofs}. \\
As a consequence of the previous result, we get the following.
\begin {cor} 
\label{CorStructuralPropSol}
\begin{itemize} 
\item[1.] If the equation (\ref{RBSDE_ M}) admits a solution (for some $M\in\mathbb{N}\cup\set{\infty}$), then the solution is unique. This solution is uniformly upper-bounded and this upper-bound can be chosen independent of $M$.
\item[2.] If the equation (\ref{RBSDE_ M}) admits a solution (for some $M\in\mathbb{N}$), then $\Sigma^M$ and $\Sigma^M+\Theta^M\pr{\cdot}$ are positive definite and uniformly lower-bounded (by some $c_MI_n$ for some $c_M>0$). 
\item[3.] Let us assume that the equation (\ref{RBSDE_ M}) admits a solution for every $M\in\mathbb{N}\cup\set{\infty}$. Then  the unique solution $\pr{\Sigma^M,\Theta^M}$ converges uniformly (in $\mathbb{L}^\infty\pr{\Omega;\mathbb{D}\pr{\pp{0,T};\mathcal{S}^n}}\times\mathbb{L}^\infty\pr{\Omega\times \pp{0,T}\times E,q;\mathcal{S}^n}$) to $\pr{\Sigma^\infty,\Theta^\infty}$. Moreover, the sequences $\pr{\Sigma^M}_{M\geq 1}$ and $\pr{\Sigma^M+\Theta^M}_{M\geq 1}$ are non-increasing.
\item[4.] We assume \ref{AssEFinite} to hold true. Then the equation (\ref{RBSDE_ M}) admits a unique solution for every $M\in\mathbb{N}\cup\set{\infty}$. 
\end{itemize}
\end{cor}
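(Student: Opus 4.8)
The plan is to push everything through the structural equivalence of Proposition \ref{PropReductionDetSystem}: a solution $\pr{\Sigma^M,\Theta^M}$ of the backward stochastic Riccati equation \ref{RBSDE_ M} exists (and is unique, positive (semi-)definite, monotone, etc.) if and only if the iterated deterministic system \ref{RiccatiDet} enjoys the corresponding property, via the dictionary $\Sigma_t^M=\mathbf{\sigma}_j\pr{e_j,t}$ and $\Sigma_t^M+\Theta_t^M(\theta)=\mathbf{\sigma}_{j+1}\pr{e_j\oplus\pr{t,\theta},t}$. Since each equation of \ref{RiccatiDet} is exactly a generic equation \ref{GenericRiccatiM} with input $\sigma=\sigma_{j+1}$, every assertion is obtained by a finite backward induction on the jump index $j$, from the terminal layer $j=J$ (where $\mathbf{\sigma}_J=M^{-1}I_n$) down to $j=0$, feeding the outcome of Theorem \ref{TheoremSolvabilityDetSyst} at layer $j+1$ into layer $j$. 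The induction hypothesis "$\sigma_{j+1}\in\mathbb{L}^\infty\pr{\cdot;\mathcal{S}_+^n}$" needed to apply the theorem is preserved, since assertion 2.i guarantees the solution at each layer is again positive semi-definite and 2.ii that it is bounded.

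For assertion 1, uniqueness follows layer by layer: $\mathbf{\sigma}_J$ is prescribed, and given the (already unique) $\mathbf{\sigma}_{j+1}$, Theorem \ref{TheoremSolvabilityDetSyst}(2.i) forces $\mathbf{\sigma}_j$ to be unique; the dictionary then yields uniqueness of $\pr{\Sigma^M,\Theta^M}$. The uniform upper bound is immediate from 2.ii, whose bound on $\norm{p(\sigma)}_\infty$ is independent of both the input $\sigma$ and of $M\geq 1$; hence every $\mathbf{\sigma}_j$ obeys the same $M$-free bound, and so do $\Sigma^M$ and $\Theta^M$ through the representation. For assertion 2 (with $M\in\mathbb{N}$), assertion 2.iii provides a constant $c_M>0$ independent of the input, hence usable at every layer, with $p_t(\sigma)\geq c_MI_n$; taking the minimum with $M^{-1}$ at the terminal layer gives a single $c_M>0$ with $\mathbf{\sigma}_j\geq c_MI_n$ for all $j$. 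By the dictionary, $\Sigma^M=\mathbf{\sigma}_j\geq c_MI_n$ and $\Sigma^M+\Theta^M(\theta)=\mathbf{\sigma}_{j+1}\geq c_MI_n$, which is exactly the claimed positive definiteness and uniform lower bound.

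Assertion 3 has two halves. Monotonicity is a direct backward induction on the comparison principle of Theorem \ref{TheoremSolvabilityDetSyst}(3): at $j=J$ one has $\mathbf{\sigma}_J^{M'}=\pr{M'}^{-1}I_n\leq M^{-1}I_n=\mathbf{\sigma}_J^{M}$ for $M\leq M'$, and if $\mathbf{\sigma}_{j+1}^{M'}\leq\mathbf{\sigma}_{j+1}^{M}$ then, applying assertion 3 with $\eta=\mathbf{\sigma}_{j+1}^{M'}$, $\sigma=\mathbf{\sigma}_{j+1}^{M}$ and terminal data $\pr{M'}^{-1}I_n\leq M^{-1}I_n$, one gets $\mathbf{\sigma}_j^{M'}\leq\mathbf{\sigma}_j^{M}$; through the dictionary this is the non-increasingness of $\pr{\Sigma^M}$ and $\pr{\Sigma^M+\Theta^M}$. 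The genuinely delicate point is the \emph{uniform} ($\mathbb{L}^\infty$, i.e. over $t$ \emph{and} over all marks $e$) convergence to $\pr{\Sigma^\infty,\Theta^\infty}$: for a general (non-discrete) $E$ the marks vary continuously, so a monotone-pointwise/Dini argument only gives uniformity in $t$ for each fixed $e$. I would instead establish a Gronwall-type stability estimate for \ref{GenericRiccatiM}: writing the difference of two Riccati flows as a linear equation whose coefficients are bounded (the bounds coming from assertions 1--2, hence independent of $e$ and $M$), one obtains $\sup_t\norm{p_t(\sigma,M)-p_t(\sigma',\infty)}\leq C\pr{M^{-1}+\sup_t\norm{\sigma-\sigma'}}$ with $C$ independent of $e,M$. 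Setting $\varepsilon_j^M:=\sup_{e,t}\norm{\mathbf{\sigma}_j^M-\mathbf{\sigma}_j^\infty}$ and iterating $\varepsilon_j^M\leq C\pr{M^{-1}+\varepsilon_{j+1}^M}$ from the base $\varepsilon_J^M=M^{-1}$, the finitely many layers yield $\varepsilon_j^M\leq C' M^{-1}\to 0$ uniformly, which is precisely the asserted $\mathbb{L}^\infty$-convergence.

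Finally, for assertion 4 the missing ingredient relative to assertion 1 is \emph{existence}, and within existence the only real obstruction is the measurability of the selection $e\mapsto\mathbf{\sigma}_j\pr{e,\cdot}$ required to legitimately plug $\sigma_{j+1}$ into layer $j$ (cf. Remark \ref{remarkMeasurability}(iv)): solvability of each individual equation is already granted by Theorem \ref{TheoremSolvabilityDetSyst}(2.i). Under Assumption \ref{AssEFinite} this obstruction disappears: jump time-homogeneity makes the coefficients $A_j,B_j,C_j$ (and $\lambda,Q$) depend on $e$ only through the modes $\pr{\gamma_0,\ldots,\gamma_j}$ and the current time, so $\mathbf{\sigma}_j\pr{e,\cdot}$ depends on $e$ only through these discrete modes and through the left endpoint $t_{\abs{e}}$; since $E$ is discrete and countable the mode-dependence is automatically measurable, while the $t_{\abs{e}}$- and $t$-dependence is continuous, giving joint $\mathcal{B}_{j+1}\otimes\mathcal{B}\pr{\pp{0,T}}$-measurability. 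A backward induction on $j$ (base $\mathbf{\sigma}_J=M^{-1}I_n$) thus produces a full, measurable, positive semi-definite solution of \ref{RiccatiDet} for every $M\in\mathbb{N}\cup\set{\infty}$, and Proposition \ref{PropReductionDetSystem} turns it into a solution of \ref{RBSDE_ M}; uniqueness is assertion 1.
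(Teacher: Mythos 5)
Your proposal is correct and follows the paper's own strategy almost verbatim: reduction to the iterated deterministic system via Proposition \ref{PropReductionDetSystem}, descending recurrence over the jump index $j$, with Theorem \ref{TheoremSolvabilityDetSyst} (2.i, 2.ii, 2.iii, 3) supplying uniqueness, the $M$-free upper bound, the lower bound $c_M I_n$, monotonicity, and Assumption \ref{AssEFinite} invoked only for measurability in assertion 4 --- exactly as in the paper's sketch. The one place where you genuinely diverge is the uniform convergence in assertion 3. The paper takes the monotone pointwise limit $\sigma(e,\cdot)$ of $\sigma^M(e,\cdot)$, identifies it with $\sigma^\infty(e,\cdot)$ by passing to the limit in the integral form of the equation (dominated convergence, domination from the uniform upper bound), and leaves implicit the upgrade from pointwise to $\mathbb{L}^\infty$ convergence; as you correctly observe, a Dini-type argument gives uniformity in $t$ for each fixed mark $e$, but uniformity over the marks (whose jump-time components range over a continuum even when $E$ is discrete) is not automatic from monotonicity alone. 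Your replacement --- a Gronwall stability estimate for the generic flow, $\sup_t\norm{p_t(\sigma,M)-p_t(\sigma',\infty)}\leq C\pr{M^{-1}+\sup_t\norm{\sigma_t-\sigma'_t}}$ with $C$ depending only on $T$ and the coefficient bounds (note that $\sigma\mapsto(I_n+\sigma)^{-1}$ and $\sigma\mapsto\sigma(I_n+\sigma)^{-1}=I_n-(I_n+\sigma)^{-1}$ are $1$-Lipschitz on $\mathcal{S}^n_+$, so the maps $\sigma\mapsto a(\sigma),b(\sigma),c(\sigma)$ are Lipschitz uniformly in $e$ and $M$), iterated over the finitely many layers from $\varepsilon^M_J=M^{-1}$ --- is sound, closes this gap cleanly, and buys more than the paper's argument: a convergence rate $O(M^{-1})$ that is uniform in $e$, and independence from any monotonicity considerations for the convergence itself (monotonicity then being needed only for the ordering claim). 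This quantitative route is a legitimate strengthening of the paper's proof of that item.
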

Once again, the proof is postponed to Section \ref{SectionProofs}. Although the arguments follow (rather) immediately from the assertion of Theorem \ref{TheoremSolvabilityDetSyst}, we strive to provide our readers with the main elements.
\begin{remark}
In practice, given a direction of interest $\xi$, to check if the system can be driven $\varepsilon>0$-close to this direction, one proceeds as follows.
\begin{enumerate}
\item Pick $N$ and $M$ large enough.
\item Solve the backward Riccati structure system (\ref{RiccatiDet}) (leading to $\pr{\Sigma,\Theta}$).
\item Solve the (similar !) standard structure systems associated to (\ref{EqEta}) and (\ref{YrM}) (leading to $\pr{\eta,\zeta}$ resp. to $Y$).
\item Check that your desired error $\varepsilon$ does not exceed $\mathbb{E}\pp{\int_0^T\int_E\scal{\pr{I+\Sigma_r+\Theta_r(\theta)}\zeta_r(\theta),\zeta_r(\theta)}\hat{q}(drd\theta)}$.
\item If the error is acceptable with respect to $\varepsilon$ (meaning that the direction is approximately reachable), then the best control is $u^*_t:=-NB_t^*Y_t$.
\end{enumerate}
\end{remark}
\section{Conclusion and Perspectives}
In this paper we have given a controlled BSDE approach to the characterization of approximately reachable directions for switched systems presenting a controlled piecewise linear structure. The main results in Theorem \ref{ThMain} indicate that the controls allowing to (almost optimally) approach feasible targets (by solving the approximated problems $V^N$) have an intrinsic non-Markovian structure (a fortiori, they cannot be expected to be in a closed-loop form. Of course, for the initial problem with value function $V$, in general, the optimal control does not exist as approximately reachable targets can be envisaged without requiring them to be exactly reachable).\\
While the characterization given in our paper allows dealing with gene networks with single reactants (translating in linear systems), this class is hardly sufficient to investigate real-life systems (translating in polynomial dynamics). In a very early version, we have equally obtained some regularity properties for $V$ (suppressed due to their irrelevance for the mathematical arguments developed). Based on this regularity and versions of Peng's semigroup property for BSDEs, we hope to address the nonlinear case. This will provide an answer to general accurateness of models of gene networks (see discussions on bi-stability in \cite{crudu_debussche_radulescu_09}).
\section{Proofs of Results}
\label{SectionProofs}
We have gathered in this section the proofs of the previously stated results. Whenever the approach and the results are rather classical (for example, in connection to standard deterministic Riccati systems), only the main ingredients of the proofs are mentioned.\\
We begin with some elements of proof for the (rather standard) Proposition \ref{PropATC_abstract}]. We have hinted the connection between the dual of controllability operators and the BSDE (\ref{DualBSDE0}). Further elements can be found in \cite{GoreacMartinez2015}.
\begin{proof}[Proof of Proposition \ref{PropATC_abstract}] One simply notes that approximate terminal controllable to $\xi$ is equivalent to the closure of the range of the operator $L$ in $\mathbb{L}^2\left(\Omega,\mathcal{F}_{\pp{0,T}},\mathbb{P};\mathbb{R}^n\right)$ containing the range of the operator $L':\mathbb{R}\rightarrow\mathbb{L}^2\left(\Omega,\mathcal{F}_{\pp{0,T}},\mathbb{P};\mathbb{R}^n\right)$ given by $L'(r):=r\xi,$ for all $r\in \mathbb{R}$. Equivalently, one  can write the condition on the kernels of the adjoint operators i.e. $\ker\pr{L^*}\subset\ker\pr{\pr{L'}^*}$. The identification of this dual allows to complete the proof.
\end{proof}
We now turn to the structural equation (\ref{RiccatiDet}) and the proof of Theorem \ref{TheoremSolvabilityDetSyst}. Most of the assertions follow from the standard form of the generic equation (\ref{GenericRiccatiM}) and, for such assertions, we only sketch the arguments. 
\begin{proof}[Proof of Theorem \ref{TheoremSolvabilityDetSyst}]
\begin{itemize}
\item[1.] The first assertion is a simple consequence of the Assumption \ref{AssCoeff}. The reader will note that these bounds do not depend on the iterations. \\
\item[2.i.] In order to prove the second assumption, let us simplify the notation by dropping the dependence on $\sigma$. The reader is invited to note that $c$ and $b$ are positive semi-definite (by definition). Once the class of coefficients established, the reader is invited to note that the equation (\ref{GenericRiccatiM}) is now a standard one (associated to deterministic LQ problems). We will only sketch the proof.
Existence and uniqueness of a  solution follows from classical arguments on deterministic Riccati equations (e.g. \cite[Chapter 6, Corollary 2.10]{yong_zhou_99}) if $c_t\gg0$, for all $0\leq t\leq T$. Otherwise, one uses classic tricks by considering the penalized coefficients $c_t^{k}:=c_t+k^{-1} I_n$ and the associated solution $p^k$ satisfying the equation \begin{equation*}
\left\lbrace
\begin{split}
dp_t^k&=p_t^kc_t^kp_t^k+a_tp_t^k+p_t^ka_t^*-b_t,\textnormal{ for }t_{\abs{e}}< t\leq T,\\
p_T^k&=M^{-1}I_n.
\end{split}
\right.
\end{equation*} Then,
\begin{itemize}
\item the solutions $p^k$ are uniformly bounded from above by the solution of the Lyapunov equation (obtained by merely taking $c^k=0$).
\item the sequence $\pr{p^k}_k\in \mathbb{L}^\infty\pr{\pp{0,T};\mathbb{S}_+^n}$ is nondecreasing in $k$. This is obtained by writing down the standard Riccati equation satisfied by $p^k-p^{k'}$;
\item the conclusion follows by taking the limit as $k\uparrow\infty$ in the integral form of the solution $p_k$ \[\pr{\textnormal{i.e. }p_t^k=M^{-1}I_n-\int_t^T\pr{p_s^kc_s^kp_s^k+a_sp_s^k+p_s^ka_s^*-b_s}ds}\] and invoking dominated convergence.
\item The reader is invited to note that, for any integer $M\geq 1$, the solution $p$ is actually positive definite (a glance at 2. iii. will actually provide the reader with the inverse of such matrices). The same kind of arguments work for $M=\infty$ (i.e. when the final condition is $p_T=0_n$) except, in this case, the solution is only positive semi-definite.
\end{itemize}
\item[2.ii.] One only needs to note that all solutions $p^k$ are bounded from above by the solution of the Lyapunov equation \[\left\lbrace
\begin{split}
q_T&=I_n,\\
dq_t&=a_tq_t+q_ta_t^*-b_t,\textnormal{ for }t_{\abs{e}}< t\leq T.
\end{split}\right.
\]which is uniformly bounded owing to Gronwall's inequality and (\ref{AssCoeffabc}).
\item[2.iii.] Let us consider $M>0$ (finite and) fixed. Then the inverse of the solution $p(\sigma)$ given by equation (\ref{GenericRiccatiM}) satisfies 
\begin{equation*}
\left\lbrace
\begin{split}
dp_t^{-1}(\sigma)&=p_t^{-1}(\sigma)b_t(\sigma)p_t^{-1}(\sigma)-a_t^*(\sigma)p_t^{-1}(\sigma)-p_t^{-1}(\sigma)a_t(\sigma)-c_t(\sigma),\textnormal{ for }t_{\abs{e}}< t\leq T,\\
p_T^{-1}(\sigma)&=MI_n.
\end{split}
\right.
\end{equation*}
Existence and uniqueness for the solution of such equations follow the same arguments in 2. i. Moreover, owing to (\ref{AssCoeffabc}), it follows that $p^{-1}$ is upper bounded by some $c_M^{-1}I_n$ (independent of $\sigma$ but possibly depending on $M$). Our assertion follows.
\item[3.] To prove the third assertion, let us begin with simplifying notation and set $p^1:=p(\sigma)$ and $p^2:=p(\eta)$. Then
\begin{equation*}
\left\lbrace
\begin{split}
d\pr{p^1-p^2}_t&=\pr{p^1-p^2}_tc_t(\sigma)\pr{p^1-p^2}_t+\hat{a}_t\pr{p^1-p^2}_t+\pr{p^1-p^2}_t\hat{a}_t^*-\hat{c}_t,\textnormal{ for }t_{\abs{e}}\leq t\leq T,\\
d\pr{p^1-p^2}_T&=\pr{M^{-1}-\pr{M'}^{-1}}I_n.
\end{split}
\right.
\end{equation*}
Here, $\hat{a}_t=a_t(\sigma)+p_t^2c_t(\sigma)$ and \[\hat{c}_t=p_t^2\pp{-c_t(\sigma)+c_t(\eta)}p_t^2+\pp{a_t(\eta)-a_t(\sigma)}p_t^2+p_t^2\pp{a_t(\eta)-a_t(\sigma)}^*+b_t(\sigma)-b_t(\eta).\] We only need to prove that $\hat{c}_t\geq 0_n$ and the conclusion will follow (the previous equation becomes a standard Riccati one having a positive semi-definite solution). \\
Owing to (\ref{GenericRiccatiMCoeff}), one gets
\begin{equation}
\begin{split}
\hat{c}_t=&-p_t^2\pp{\int_E\pp{C_j^*\pr{e,t,\theta}+I_n}\pp{I_n+\mathbf{\sigma}\pr{e\oplus \pr{t,\theta},t}}^{-1}\pp{C_j\pr{e,t,\theta}+I_n}\lambda\pr{\gamma_{\abs{e}},t}Q\pr{\gamma_{\abs{e}},t,d\theta}}p_t^2\\
&+p_t^2\pp{\int_E\pp{C_j^*\pr{e,t,\theta}+I_n}\pp{I_n+\mathbf{\eta}\pr{e\oplus \pr{t,\theta},t}}^{-1}\pp{C_j\pr{e,t,\theta}+I_n}\lambda\pr{\gamma_{\abs{e}},t}Q\pr{\gamma_{\abs{e}},t,d\theta}}p_t^2\\
&+\int_E \pp{I_n+\mathbf{\sigma}\pr{e\oplus \pr{t,\theta},t}}^{-1}\pp{C_j\pr{e,t,\theta}+I_n}\lambda\pr{\gamma_{\abs{e}},t}Q\pr{\gamma_{\abs{e}},t,d\theta}p_t^2\\
&-\int_E \pp{I_n+\mathbf{\eta}\pr{e\oplus \pr{t,\theta},t}}^{-1}\pp{C_j\pr{e,t,\theta}+I_n}\lambda\pr{\gamma_{\abs{e}},t}Q\pr{\gamma_{\abs{e}},t,d\theta}p_t^2\\
&+\pr{\int_E \pp{I_n+\mathbf{\sigma}\pr{e\oplus \pr{t,\theta},t}}^{-1}\pp{C_j\pr{e,t,\theta}+I_n}\lambda\pr{\gamma_{\abs{e}},t}Q\pr{\gamma_{\abs{e}},t,d\theta}p_t^2}^*\\
&-\pr{\int_E \pp{I_n+\mathbf{\eta}\pr{e\oplus \pr{t,\theta},t}}^{-1}\pp{C_j\pr{e,t,\theta}+I_n}\lambda\pr{\gamma_{\abs{e}},t}Q\pr{\gamma_{\abs{e}},t,d\theta}p_t^2}^*\\
&-\int_E \pp{I_n+\mathbf{\sigma}\pr{e\oplus \pr{t,\theta},t}}^{-1}\lambda\pr{\gamma_{\abs{e}},t}Q\pr{\gamma_{\abs{e}},t,d\theta}\\
&+\int_E \pp{I_n+\mathbf{\eta}\pr{e\oplus \pr{t,\theta},t}}^{-1}\lambda\pr{\gamma_{\abs{e}},t}Q\pr{\gamma_{\abs{e}},t,d\theta}.
\end{split}
\end{equation}
By rewriting the terms and owing to the assumption (that $\eta\leq\sigma$), one gets
\end{itemize}
\begin{equation}
\hat{c}_t\geq 
\int_E \left[
\begin{split}
\pr{\pp{C_j\pr{e,t,\theta}+I_n}p_t^2-I_n}^*\\
\times\pr{\pp{I_n+\mathbf{\eta}\pr{e\oplus \pr{t,\theta},t}}^{-1}
-\pp{I_n+\mathbf{\sigma}\pr{e\oplus \pr{t,\theta},t}}^{-1}}\\
\times\pr{\pp{C_j\pr{e,t,\theta}+I_n}p_t^2-I_n}\end{split}\right]\lambda\pr{\gamma_{\abs{e}},t}Q\pr{\gamma_{\abs{e}},t,d\theta}
\geq 0_n.
\end{equation}This completes our proof.
\end{proof}
Finally, for Corollary \ref{CorStructuralPropSol} we give the following.
\begin{proof}[Sketch of the proof of Corollary \ref{CorStructuralPropSol}]
\begin{itemize}
\item[1.] Uniqueness in first assertion follows from Proposition \ref{PropReductionDetSystem} and the uniqueness part in Theorem \ref{TheoremSolvabilityDetSyst}, assertion 2 i. by descending recurrence over $j\leq J$. The (uniform) upper-bounds are provided by Theorem \ref{TheoremSolvabilityDetSyst}, assertion 2 ii.
\item[2.] Owing to the representation of the solution in Proposition \ref{PropReductionDetSystem}, both the solution $\Sigma^M$ and $\Sigma^M+\Theta^M\pr{\cdot}$ can be represented as solutions to equations of form (\ref{GenericRiccatiM}). Then, lower-bound follows from Theorem \ref{TheoremSolvabilityDetSyst}, assertion 2. iii.
\item[3.] For the convergence, one proceeds as follows. 
\begin{itemize}
\item If $M'\geq M$, one proves by descending recurrence over $j\leq J$ and owing to Theorem \ref{TheoremSolvabilityDetSyst} assertion 3. that \[\sigma^{M'}\pr{e,\cdot}\leq \sigma^M\pr{e,\cdot},\textnormal{ for all }e\in E_T^{j+1}.\]
\item One deduces the existence of $\sigma\pr{e,\cdot}$ the (possibly discontinuous) limit as $M\uparrow\infty$ of $\sigma^{M}\pr{e,\cdot}$. Identification of $\sigma\pr{e,\cdot}$ and $\sigma^\infty\pr{e,\cdot}$ is done using the integral form of the solution and dominated convergence. Domination is guaranteed by the first assertion in our Corollary (note that the uniform in $M$ upper-bound is valid both for $\Sigma^M$ and $\Sigma^M+\Theta^M(\cdot)$).
\end{itemize}
\item[4.] Assumption \ref{AssEFinite} is only needed in order to guarantee measurability of $\sigma_{j}$ for $j\leq J-1$ (see last item in Remark \ref{remarkMeasurability}). Under this assumption, the (unique) solution is constructed by (descending) recurrence (over $j$) by using Theorem \ref{TheoremSolvabilityDetSyst} assertion 2. One concludes owing to the representation Proposition \ref{PropReductionDetSystem}.
\end{itemize}
\end{proof}
Having established the structural properties, we are now able to complete the proof of the reachability-linked Theorem \ref{ThMain}.
\begin{proof}[Proof of Theorem \ref{ThMain}]
1. To prove the first assertion, we begin with introducing the following (more standard) backward stochastic differential equation.
\begin{equation}
\label{EqEtaModif}
\left\lbrace 
\begin{split}
d\bar{\eta}_r^{M}=&\int_E\bar{\zeta}_r^{M}(\theta)\tilde{q}(drd\theta)+A_r\bar{\eta}_r^{M}dr+\pr{\Sigma_r^{M}C_r^*(\theta)-\Theta_r^{M}(\theta)}\pr{I_n+\Sigma_r^{M}+\Theta_r^{M}(\theta)}^{-1}C_r(\theta)\bar{\eta}_r^{M}\hat{q}(drd\theta)\\
&-\int_E\pp{\Sigma_r^{M}C_r^*(\theta)-\Theta_r^{M}(\theta)}\pr{I_n+\Sigma_r^{M}+\Theta_r^{M}(\theta)}^{-1}\bar{\zeta}_r^{M}(\theta)\hat{q}(drd\theta),\textnormal{ for all }0\leq r\leq T;\\
\eta_T^M=&-\xi.
\end{split}
\right.
\end{equation}
It is then clear that, provided that existence and uniqueness for either solution holds true, one toggles between the two equations by setting $\pr{\bar{\eta}_r^{M},\bar{\zeta}_r^{M}(\theta)}=\pr{\eta_r^M,C_r(\theta)\eta_r^M+\pr{I_n+\Sigma_r^{M}+\Theta_r^{M}(\theta)}\zeta_r^M(\theta)}$. Keeping adequate integrability is guaranteed by Corollary \ref{CorStructuralPropSol} and the assumption on the coefficients (Assumption \ref{AssCoeff}).\\
Owing to Corollary \ref{CorStructuralPropSol} assertion 1. and to Assumption \ref{AssCoeff}, it follows that the equation (\ref{EqEtaModif}) is a standard linear backward stochastic differential equation whose coefficients are (essentially) bounded. Existence and uniqueness follow from standard arguments (e.g. \cite[Theorem 3.4]{Confortola_Fuhrman_2014}, see also \cite{CFJ_2014} for further reduction to systems of ordinary differential equations).\\
We now prove the remaining assertions. We proceed in several steps and begin with fixing (for the time being) $M\in\mathbb{N}$.\\
\underline{Step 0.} (Control interpretation for the BSDE (\ref{BSDE1}))\\
As it is often the case, the $\mathbf{Z}$ component in the equation (\ref{BSDE1}) can be interpreted as a predictable control and the solution $\mathbf{X}$ as a controlled, forward one by setting
\begin{equation}
\left\lbrace
\begin{split}
d\mathbb{X}_t^{x,u,\mathbb{Z}}&=\pr{A_t\mathbb{X}_t^{x,u,\mathbb{Z}}+B_tu_t}dt+\int_E\pr{C_t(\theta)\mathbb{X}_t^{x,u,\mathbb{Z}}+\mathbb{Z}_t(\theta)}\tilde{q}(dt,d\theta),\ 0\leq t\leq T,\\
\mathbb{X}_0^{x,u,\mathbb{Z}}&=x\in\mathbb{R}^n.
\end{split}
\right.
\end{equation}
 Then, the value function $V^N$ is the infimum over such $\pr{x,u,\mathbb{Z}}$ of the cost functional \[\mathbb{J}^N\pr{x,u,\mathbb{Z},T}:=\frac{1}{N}\mathbb{E}\pp{\int_0^T\norm{u_r}^2dr}+\mathbb{E}\pp{\int_0^T\int_E\norm{\mathbb{Z}_r(\theta)}^2\hat{q}(drd\theta}\] computed for trajectories satisfying the final constraint $\mathbb{X}_T^{x,u,\mathbb{Z}}=\xi$. The reader is invited to note that this implies \begin{equation}\label{LowerBddVN}
 \begin{split}
 &V^N(T,\xi)\geq \\
 &\inf_{x\in\mathbb{R}^n,u\in\mathcal{U},\mathbb{Z}\in\mathbb{L}^2\pr{\Omega\times [0,T]\times E,\mathcal{P},q;\mathbb{R}^n}}M\mathbb{E}\norm{\mathbb{X}_T^{x,u,\mathbb{Z}}-\xi}^2+\frac{1}{N}\mathbb{E}\pp{\int_0^T\norm{u_r}^2dr}+\mathbb{E}\pp{\int_0^T\int_E\norm{\mathbb{Z}_r^{T,\xi,u}(\theta)}^2\hat{q}(drd\theta}.
 \end{split}
\end{equation}
\underline{Step 1.} (Lower bound)\\ 
At this point, we consider, for $x\in\mathbb{R}^n$, $u\in\mathcal{U}$ and $\mathbb{Z}\in\mathbb{L}^2\pr{\Omega\times [0,T]\times E,\mathcal{P},q;\mathbb{R}^n}$ predictable fixed, the following (forward stochastic differential system).
\begin{equation}
\label{YrM}
\begin{split}
&\left\lbrace 
\begin{split}
dY_r^{M}=&\pr{-A_r^*Y_r^{M}+N\pr{\Sigma_r^{M}}^{-1}B_rB_r^*Y_r^{M}+\pr{\Sigma_r^{M}}^{-1}B_ru_r}dr\\
&\int_E \left(\begin{split}
&\pp{\pr{\Sigma_r^{M}+\Theta_r^{M}(\theta)}^{-1}-\pr{\Sigma_r^{M}}^{-1}}\mathbb{Z}_r(\theta)\\
+&\pp{\pr{\Sigma_r^{M}+\Theta_r^{M}(\theta)}^{-1}-\pr{\Sigma_r^{M}}^{-1}-C_r^*(\theta)}\times\\
&\times\pr{I_n+\Sigma_r^{M}+\Theta_r^M(\theta)}^{-1} \pp{C_r(\theta)\Sigma_r^{M}-\Theta_r^{M}(\theta)}Y_r^M\\
+&\pp{\pr{\Sigma_r^{M}+\Theta_r^{M}(\theta)}^{-1}-\pr{\Sigma_r^{M}}^{-1}-C_r^*(\theta)}\zeta_r^M(\theta)
\end{split}\right) \hat{q}(drd\theta)\\
&+\int_E \left(\begin{split} &\pr{\Sigma_r^{M}+\Theta_r^M(\theta)}^{-1}\mathbb{Z}_r(\theta)\\
+&\pr{\Sigma_r^{M}+\Theta_r^M(\theta)}^{-1} \pp{C_r(\theta)\Sigma_r^{M}-\Theta_r^{M}(\theta)}Y_r^M\\
+&\pp{I+\pr{\Sigma_r^{M}+\Theta_r^M(\theta)}^{-1}}\zeta_r^{M}(\theta)\end{split}\right) \tilde{q}(drd\theta),\\
Y_0^M=&\pr{\Sigma_0^M}^{-1}\pr{X_0+\eta_0^M}.
\end{split}
\right.
\end{split}
\end{equation}
Existence and uniqueness of this solution is standard (the reader only needs to recall Corollary \ref{CorStructuralPropSol} assertion 2. and Assumption \ref{AssCoeff}). The reader may want to note (using, for example, Itô's formula applied to $\Sigma_r^MY_r^M$ on $[0,t]$) that
\begin{equation}
\label{trajectory_eq}
\Sigma_r^MY_r^M=\mathbb{X}_r^{x,u,\mathbb{Z}}+\eta_r^M.
\end{equation}
We emphasize (and this is even more obvious according to this last equality) that $Y^M$ actually also depends on $x,u,\mathbb{Z}$. We have dropped this dependence in order to simplify the notations.\\
\underline{Step2.} (Link with the optimization problem)\\
By applying Itô's formula to the Euclidean product between $\mathbb{X}_r^{x,u\mathbb{Z}}+\eta_r^M$ and $Y_r^M$ on $[0,T]$, one establishes the equality
\begin{equation}
\begin{split}
&\pr{M\mathbb{E}\norm{X_T^{x,u,\mathbb{Z}}-\xi}^2+\frac{1}{N}\mathbb{E}\pp{\int_0^T\norm{u_r}^2dr}+\mathbb{E}\pp{\int_0^T\int_E\norm{\mathbb{Z}_r^{T,\xi,u}(\theta)}^2\hat{q}(drd\theta}}\\
=&\mathbb{E}\pp{\scal{Y_0^{M},x+\eta_0^M}}\\
+&\mathbb{E}\pp{\int_0^t\int_E\norm{\begin{split}&\pr{I+\pr{\Sigma_r^M+\Theta_r^M(\theta)}^{-1}}^\frac{-1}{2}\pr{\Sigma_r^M+\Theta_r^M(\theta)}^{-1}\pr{C_r(\theta)\Sigma_r^M-\Theta_r^M(\theta)}Y_r^{M}\\
+&\pr{I+\pr{\Sigma_r^M+\Theta_r^M(\theta)}^{-1}}^\frac{1}{2}\zeta_r^M+\pr{I+\pr{\Sigma_r^M+\Theta_r^M(\theta)}^{-1}}^\frac{1}{2}\mathbb{Z}_r^{T,\xi,u}(\theta)\end{split}}^2\hat{q}(drd\theta)}\\
&+\mathbb{E}\pp{\int_0^T\norm{\sqrt{N}B_r^*Y_r^{M}+\frac{1}{\sqrt{n}}u_r}^2dr}\\
&+\mathbb{E}\pp{\int_0^T\int_E\scal{\pr{I+\Sigma_r^M+\Theta_r^M(\theta)}\zeta_r^M(\theta),\zeta_r^M(\theta)}\hat{q}(drd\theta)}
\end{split}
\end{equation}
As consequence, using (\ref{LowerBddVN}), one gets the (announced) lower bound for the (penalized) cost function
\begin{equation}
V^N(T,\xi)\geq\mathbb{E}\pp{\int_0^T\int_E\scal{\pr{I+\Sigma_r^M+\Theta_r^M(\theta)}\zeta_r^M(\theta),\zeta_r^M(\theta)}\hat{q}(drd\theta)}.
\end{equation}
\underline{Step 3.} (Limit)\\
A simple glance at Corollary \ref{CorStructuralPropSol} assertion 3. shows that $I_n+\Sigma^\infty+\Theta^\infty\leq I_n+\Sigma^M+\Theta^M$ for every $M\geq 1$. It follows that \begin{equation}\label{lowerbdVN}V^N(T,\xi)\geq\mathbb{E}\pp{\int_0^T\int_E\scal{\pr{I+\Sigma_r^\infty+\Theta_r^\infty(\theta)}\zeta_r^M(\theta),\zeta_r^M(\theta)}\hat{q}(drd\theta)}.\end{equation}
Second, owing to Corollary \ref{CorStructuralPropSol} assertion 3. and the (uniform) upper-bounds in Corollary \ref{CorStructuralPropSol} assertion 1., respectively Assumption \ref{AssCoeff}, it follows that the coefficients of the linear equation (\ref{EqEta}) written for $M\in\mathbb{N}$ i.e.
$\pr{\Sigma_r^{M}C_r^*(\theta)-\Theta_r^{M}(\theta)}$ and
$\pr{I_n+\Sigma_r^{M}+\Theta_r^{M}(\theta)}^{-1}$ converge in $\mathbb{L}^\infty\pr{\Omega\times\pp{0,T}\times E,\mathcal{P},\hat{q};\mathbb{R}^n}$ to those of equation (\ref{EqEta}) with $M=\infty$. Using linearity and classical estimates for BSDE (e.g. \cite[Corollary 3.6]{Confortola_Fuhrman_2014}), one establishes the convergence of $\zeta^M$ to $\zeta^\infty$ in $\mathbb{L}^2\pr{\Omega\times\pp{0,T}\times E,\mathcal{P},q;\mathbb{R}^n}$. By allowing $M\rightarrow\infty$ in (\ref{lowerbdVN}), one gets\begin{equation*}V^N(T,\xi)\geq\mathbb{E}\pp{\int_0^T\int_E\scal{\pr{I+\Sigma_r^\infty+\Theta_r^\infty(\theta)}\zeta_r^\infty(\theta),\zeta_r^\infty(\theta)}\hat{q}(drd\theta)}.\end{equation*}\\
\underline{Step 4.} (Optimality)\\
The final task is to prove that the previous inequality is actually an equality. Starting from $Y$ defined in (\ref{Yoptimal}) (well-posedness following again from Corollary \ref{CorStructuralPropSol} assertion 1.), we set $X:=\Sigma^\infty Y-\eta^\infty$. This process satisfies
\begin{equation}
\left\lbrace
\begin{split}
dX_t=&\pr{A_tX_t-NB_tB_t^*Y_t}dt\\
&+\int_E\set{C_t(\theta)X_t-\pp{\pr{I_n+\Sigma_t^\infty+\Theta_t^\infty(\theta)}^{-1}\pr{C_t(\theta)\Sigma_t^\infty-\Theta_t^\infty(\theta)}Y_t+\zeta_t^\infty(\theta)}}\tilde{q}(dtd\theta)\\
X_T=&\xi.
\end{split}
\right.
\end{equation}
One deduces that $X$ can be identified with $\mathbf{X}^{T,\xi,u^*}$, for $u_r^*=-NB_r^*Y_r$. Moreover, it follows by uniqueness that $\mathbf{Z}_t^{T,\xi,u^*}(\theta)=-\pr{\pr{I_n+\Sigma_t^\infty+\Theta_t^\infty(\theta)}^{-1}\pr{C_t(\theta)\Sigma_t^\infty-\Theta_t^\infty(\theta)}Y_t+\zeta_t^\infty(\theta)}$.
To compute the cost functional $J^N\pr{T,\xi,u^*}$, one notes that $X_0+\eta_0^\infty=0=X_T+\eta_T^\infty$. Then, by simply applying Itô's formula to $\scal{Y,X+\eta^\infty}$ on $\pp{0,T}$, one gets \[J^N\pr{T,\xi,u^*}=\mathbb{E}\pp{\int_0^T\int_E\scal{\pr{I+\Sigma_r^\infty+\Theta_r^\infty(\theta)}\zeta_r^\infty(\theta),\zeta_r^\infty(\theta)}\hat{q}(drd\theta)}.\]The proof of our result is now complete.
\end{proof}

\bibliographystyle{plain}
\bibliography{../LatestBib/bibliografie_2018.bib} 

\end{document}